\documentclass[10pt]{article}
\usepackage{amsfonts}
\usepackage{amsfonts}  

\usepackage{amsthm}
\usepackage{amsmath}
\usepackage{color}

\title{Ergodic pairs for singular or degenerate fully nonlinear operators} 

\author{I. Birindelli,  F. Demengel, F. Leoni}

\date{}

\catcode`@=11 \@addtoreset{equation}{section} \catcode`@=12

\newtheorem{theo}{Theorem}[section]
\newtheorem{prop}[theo]{Proposition}
\newtheorem{rema}[theo]{Remark}
\newtheorem{defi}[theo]{Definition}

\def\R{\mathbb  R}
\def\grad{\nabla}

\parindent=0pt

\setlength{\textwidth}{15.5cm}
\setlength{\oddsidemargin}{0.5cm}

\begin{document}
\maketitle
\begin{abstract}
We study the ergodic problem for fully nonlinear operators which may be singular or degenerate when the gradient of solutions vanishes. We prove the convergence of both explosive solutions and solutions of Dirichlet problems for  approximating equations. We further characterize the ergodic constant as the infimum of constants for which there exist bounded sub solutions. As intermediate results of independent interest, we prove a priori Lipschitz estimates depending only on the norm of the zeroth order term, and a comparison principle for equations having no zero order terms.
\end{abstract}
\section{Introduction} \label{intro}
In 1989, in a fundamental paper \cite{LL}, Lasry and Lions study solutions of 
$$
-\Delta u+|\grad u|^q+\lambda u=f(x)\  \mbox{in}\ \Omega$$
 that blow up on the boundary of $\Omega$. Here $q>1$ and $\Omega$ 
is a  ${ \cal C}^2$  bounded domain in  $\R^N$.
In particular they introduce the concept of ergodic pair.  Among other things, they prove that in the subquadratic case $q\leq 2$ there
exists a unique constant $c_\Omega$, called ergodic constant, and there exists a unique, up to a constant, solution
of
 $$
-\Delta \varphi+|\grad \varphi|^q-c_\Omega=f(x)\  \mbox{in}\ \Omega, \ \varphi=+\infty \ \mbox{on} \ \partial\Omega.$$
It is well known that for $q=2$, $-c_\Omega$ is just the principal eigenvalue of $(-\Delta+ f )(\cdot)$.
This important paper has generated a huge and interesting literature, also in connection with the stochastic 
interpretation of the problem.
Interestingly, while the concept of principal eigenvalue has been extended to fully nonlinear operators of 
different types (see e.g. \cite{BEQ}, \cite{BD1}), the notion of ergodic constant has not been much investigated in fully nonlinear 
settings. The scope of this paper is to give a thoroughly picture of the ergodic pairs and the related 
blowing up solutions and solutions with Dirichlet boundary condition for approximating equations.

We now detail the  main theorems.
In the whole paper $\Omega$ denotes a ${\cal C}^2$  bounded domain of  $\R^N$;  ${\cal S}$ denotes the space of symmetric matrices in $\R^N$.  We consider  a uniformly elliptic homogenous operator $F$, i.e. a continuous function $F:{\cal S}\to \R$ satisfying:
 \begin{equation}\label{unifel}
  \begin{array}{c}
  \hbox{ there exist  $0<a< A$ such that for all $M, N \in  {\cal S}$, with  $N\geq 0$, and for all $t>0$,}\\[1.5ex]
      a\,  {\rm  tr}(N)  \leq F( M+ N) -F(M) \leq A \,   {\rm  tr}(N),\  \ F(tM) = t F(M) .
\end{array}      \end{equation} 
We will always consider the differential operator 
$$G(\grad u, D^2u)=-|\grad u|^\alpha F(D^2u)+|\grad u|^\beta$$ 
with $\alpha>-1$ and $\alpha+1<\beta\leq \alpha+2.$
This reduces to the Lasry Lions case for $\alpha=0$ and $a=A=1$.
\begin{theo}\label{sympaetpas} Suppose that $f$ is  bounded and locally Lipschitz continuous  in $\Omega$, and that $F$ satisfies \eqref{unifel}. Consider the
Dirichlet problems 
             \begin{equation}\label{eq3}
             \left\{ \begin{array}{cl}
                     -| \nabla u |^\alpha F(D^2 u) + |\nabla u |^\beta   = f & \hbox{ in } \ \Omega \\
                     u=0 & \hbox{ on } \ \partial \Omega ,
                     \end{array}\right.
                     \end{equation}
and,  for $\lambda >0$, 
\begin{equation}\label{eq45}
\left\{ \begin{array}{cl}
                     -| \nabla u |^\alpha F(D^2 u) + |\nabla u |^\beta   + \lambda |u|^\alpha u  = f & \hbox{ in } \ \Omega \\
                     u=0 &  \hbox{ on } \ \partial \Omega .
                     \end{array}\right.
                     \end{equation}
The following alternative holds.
\begin{enumerate}
\item\label{sympa} Suppose that there exists a bounded sub solution of \eqref{eq3}.
 Then the solution $u_\lambda$ of  \eqref{eq45}                      
satisfies: $(u_\lambda)$ is bounded and uniformly converging up to a sequence $\lambda_n\to 0$ to  a  solution of (\ref{eq3}).
\item\label{passympa} Suppose that there is  no solution for the Dirichlet problem \eqref{eq3}.
 Then, $(u_\lambda)$  satisfies, up to a sequence $\lambda_n\to 0$ and locally uniformly in $\Omega$,
\begin{enumerate}
\item\label{a}  $u_\lambda\rightarrow -\infty$; 
\item\label{b} there exists a constant $c_\Omega\geq 0$ such that $\lambda |u_\lambda|^\alpha u_\lambda\to -c_\Omega;$
\item \label{c}  $c_\Omega $ is an ergodic constant  and  $v_\lambda=u_\lambda+|u_\lambda|_\infty$ converges  to  a solution of the ergodic problem    
\begin{equation}\label{ergodic}
\left\{ \begin{array}{cl}
                     -| \nabla v |^\alpha F(D^2 v) + |\nabla v |^\beta   = f +  c_\Omega  & \hbox{ in} \ \Omega \\
                     v=+\infty &  \hbox{ on} \ \partial \Omega 
                     \end{array}\right.
\end{equation}
whose minimum is zero.
\end{enumerate}
\end{enumerate}
\end{theo}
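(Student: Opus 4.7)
My approach is to first solve \eqref{eq45} for each fixed $\lambda>0$, then extract uniform bounds and equicontinuity as $\lambda\to 0$, and finally pass to the limit. Existence and uniqueness of the viscosity solution $u_\lambda$ of \eqref{eq45} follow from Perron's method combined with the comparison principle, which is available because the zeroth order term $\lambda|u|^\alpha u$ is strictly increasing in $u$. A uniform upper bound $u_\lambda\leq C$ independent of $\lambda$ is obtained via a super-barrier of the form $K\,d(x,\partial\Omega)$ in a neighborhood of $\partial\Omega$ where $d$ is $C^2$: the superlinearity $\beta>\alpha+1$ of the gradient term allows it to dominate and absorb both $f$ and the positive part of $-\lambda|u|^\alpha u$.

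Next I would prove the uniform bound $-\lambda|u_\lambda|^\alpha u_\lambda\leq|f|_\infty$ by testing the viscosity equation against a constant at a negative interior minimum: this yields $\lambda|u_\lambda(x_m)|^\alpha u_\lambda(x_m)\geq f(x_m)\geq-|f|_\infty$. Together with the upper bound on $u_\lambda$, this makes the right hand side of the reduced equation $-|\nabla u_\lambda|^\alpha F(D^2u_\lambda)+|\nabla u_\lambda|^\beta=f-\lambda|u_\lambda|^\alpha u_\lambda$ uniformly bounded in $\lambda$, so the a priori Lipschitz estimate promised in the abstract provides local equicontinuity of $u_\lambda$ on compact subsets of $\Omega$.

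In case (1), since the bounded subsolution $\underline u$ of \eqref{eq3} is also a subsolution of \eqref{eq45} up to an error of size $\lambda|\underline u|_\infty^{1+\alpha}$ in the right hand side, comparison gives a uniform lower bound $u_\lambda\geq\underline u-o(1)$. Combined with the upper bound this is a uniform two-sided bound, and Ascoli--Arzel\`a together with viscosity stability delivers a limit $u$ solving \eqref{eq3}. In case (2), the hypothesis forces $|u_\lambda|_\infty=-\min u_\lambda\to+\infty$, otherwise the same extraction procedure would yield a solution of \eqref{eq3}. Setting $v_\lambda=u_\lambda+|u_\lambda|_\infty\geq 0$ with $\min v_\lambda=0$, the Lipschitz estimate applied to $v_\lambda$ (whose equation has the same uniformly bounded zeroth order term) yields local equicontinuity. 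From $u_\lambda(x)=-|u_\lambda|_\infty+v_\lambda(x)$, with $v_\lambda$ locally bounded and $|u_\lambda|_\infty\to\infty$, one gets $u_\lambda\to-\infty$ locally uniformly, and the expansion $\lambda|u_\lambda|^\alpha u_\lambda=-\lambda|u_\lambda|_\infty^{1+\alpha}(1+o(1))$ locally uniformly on compacta shows that, along a subsequence, $\lambda|u_\lambda|^\alpha u_\lambda\to-c_\Omega$ for some $c_\Omega\geq 0$ (positivity is clear since $\lambda|u_\lambda|_\infty^{1+\alpha}\geq 0$ and is bounded by $|f|_\infty$ as above). Viscosity stability then gives $v_\lambda\to v$ with $v$ a solution of the bulk equation in \eqref{ergodic} and $\min v=0$.

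The main obstacle is the boundary blow-up $v=+\infty$ on $\partial\Omega$ needed to complete part (c). I would handle it by constructing an explicit sub-barrier which explodes like a negative power of $d(x,\partial\Omega)$, with exponent $\gamma=(\alpha+2-\beta)/(\beta-\alpha-1)$ obtained by balancing the scalings of the gradient and Hessian terms (replaced by a logarithmic profile in the critical case $\beta=\alpha+2$). After verifying that a suitable multiple of this profile is a viscosity subsolution of the ergodic equation with constant $c_\Omega$ in a thin boundary strip $\{d<\delta\}$, a comparison argument applied to $v$ and the barrier (shifted by the sup of $v$ on $\{d=\delta\}$) forces $v$ to dominate it and hence to diverge at $\partial\Omega$.
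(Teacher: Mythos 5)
Your overall strategy matches the paper's, and the first half (existence of $u_\lambda$, the uniform bound on $u_\lambda^+$, the bound $\lambda|u_\lambda|_\infty^{1+\alpha}\leq c\,|f|_\infty$, and the treatment of case (1)) is essentially correct. The genuine gap is in case (2), at the point where you assert that $v_\lambda=u_\lambda+|u_\lambda|_\infty$ is ``locally bounded''. The interior Lipschitz estimate (Theorem \ref{lipsol}) controls only the gradient of $v_\lambda$ on compact subsets; to convert this into a local $L^\infty$ bound you must anchor $v_\lambda$ at a point where its value is known, namely a minimum point $x_\lambda$ where $v_\lambda(x_\lambda)=0$. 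Nothing in your argument prevents $x_\lambda$ from drifting to $\partial\Omega$ as $\lambda\to 0$ (where the Lipschitz constant of Theorem \ref{lipsol} degenerates like $d^{-1/(\beta-\alpha-1)}$), in which case $v_\lambda$ could diverge on every compact subset and statements (a), (b) and your expansion $\lambda|u_\lambda|^\alpha u_\lambda=-\lambda|u_\lambda|_\infty^{1+\alpha}(1+o(1))$ would all be unjustified. The paper closes this gap with precisely the barrier you postpone to the very end: the function $\phi$ of \eqref{phi}, regularized by a parameter $s>0$ so that it stays finite on $\partial\Omega$, is a subsolution of the $\lambda$-equation in the strip $\Omega\setminus\overline{\Omega}_{\delta_0}$; since $v_\lambda=|u_\lambda|_\infty\to+\infty$ on $\partial\Omega$, Theorem \ref{comp} gives $v_\lambda\geq\phi>0$ in the strip for $\lambda$ small, hence $x_\lambda\in\overline{\Omega}_{\delta_0}$ and local boundedness follows.

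The same issue affects your plan for the blow-up in part (c). You propose to compare the limit $v$ with the sub-barrier in a thin strip, but (i) the limit equation has no zeroth-order term, so comparison for it is the delicate Theorem \ref{2.4}, which requires structural hypotheses (such as $f+c_\Omega<0$ when $\alpha\neq 0$, plus boundedness and Lipschitz regularity of one of the two functions) that are not available here, and (ii) to run a comparison in $\{0<d<\delta\}$ you need boundary information for $v$ on $\partial\Omega$, which is exactly what you are trying to establish. The correct order of operations is the paper's: perform the comparison at the $\lambda$ level, where the term $\lambda|\cdot|^\alpha(\cdot)$ makes the equation proper and the boundary datum $v_\lambda=|u_\lambda|_\infty$ is known and large compared with $\phi|_{\partial\Omega}\sim \sigma s^{-\gamma}$, and only then let $\lambda\to 0$ and $s\to 0$ in the inequality $v_\lambda\geq\phi$. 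With this single reordering, and with the barrier used once to serve both purposes (compactness of the minimum points and the lower bound surviving in the limit), your outline becomes the paper's proof.
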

The standard notion of viscosity solution fails when the operator is singular, i.e., in this paper, when $\alpha<0$, 
hence we will consider viscosity solutions as defined in \cite{BD1}.

Theorem \ref{sympaetpas} gives a construction of an ergodic pair 
$(c_\Omega, v)$ as a limit of solutions of Dirichlet problems when problem \eqref{eq3} does not admit any solution. 
More in general, we will prove the existence of ergodic pairs under the regularity condition 
\begin{equation}\label{smooth}
F(\nabla d(x)\otimes \nabla d(x)) \hbox{ is $\mathcal{C}^2$ in a neighborhood of } \partial \Omega\, ,
\end{equation}
where $d(x)$ denotes here the distance function from $\partial \Omega$. 
We observe that \eqref{smooth} is certainly satisfied if the domain $\Omega$ is of class ${\cal C}^3$ and the operator $F$ is  ${\cal C}^2$, but there can be also cases with non smooth $F$ satisfying \eqref{smooth}. For instance, for all operators  $F(M)$ which depend only on the eigenvalues of $M$, such  as   Pucci's operators,  $F(\nabla d(x)\otimes \nabla d(x))$ is a constant function as long as $|\nabla d(x)|=1$. 

Under condition \eqref{smooth}, we will show that there exists a unique ergodic constant $c_\Omega$, which shares some properties with the principal eigenvalue
of the operator $|\grad u|^\alpha F(D^2u)$, even when it is not the principal eigenvalue.

Indeed,  let us define, as e.g. in \cite{BPT}, \cite{P},
          $$\mu^\star =  \inf \{ \mu\, :\,  \exists\,  \varphi \in { \cal C} ( \overline{ \Omega}), -| \nabla \varphi |^\alpha F( D^2 \varphi) + | \nabla \varphi |^\beta \leq f+ \mu \}\, .$$
Note that  $\mu^\star$ depends on $f$ and $\Omega$, but if there is no ambiguity we will not precise this dependence. 
 \begin{theo} \label{ABCDE}
Suppose that $f$ is  bounded and locally Lipschitz continuous  in $\Omega$, and that $F$ satisfies \eqref{unifel} and \eqref{smooth}. Then, there exists an ergodic constant $c_\Omega$ and it satisfies
           \begin{enumerate}
            \item\label{A}  $c_\Omega$ is unique; 
                       \item\label{B}   $c_\Omega = \mu^\star$; 
 \item\label{C}  the map $\Omega\mapsto c_\Omega$ is  nondecreasing with respect to the  domain, and continuous;
\item\label{D}   if  either $\alpha = 0$   or  $\alpha \neq 0$ and $\sup_\Omega f+ c_\Omega  <0$,  then  $\mu^\star$ is not achieved. 
Moreover,  if $ \Omega' \subset \subset \Omega$, then  $c_{\Omega' }< c_{ \Omega}$.  
\end{enumerate}
              \end{theo}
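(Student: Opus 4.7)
The plan is to establish (\ref{B}) first, since uniqueness in (\ref{A}) then follows immediately from the fact that $\mu^\star$ depends only on $f$ and $\Omega$. Given any ergodic pair $(c_\Omega, v)$, the inequality $\mu^\star \leq c_\Omega$ should follow from a truncation argument: for $k>0$, the function $\varphi_k := \min(v,k)$ is bounded and continuous on $\overline{\Omega}$, and in the Birindelli--Demengel viscosity framework for singular operators $\varphi_k$ is a subsolution of the ergodic equation, giving $\mu^\star \leq c_\Omega$. For the reverse inequality I would argue by contradiction: suppose $\mu^\star < c_\Omega$ and pick $\mu$ strictly between them. By definition of $\mu^\star$ the equation with source $f+\mu$ admits a bounded subsolution, so Theorem \ref{sympaetpas}(\ref{sympa}) applied to $f+\mu$ produces a bounded solution $u$ of the corresponding Dirichlet problem. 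Comparing $u$ with the ergodic solution $v$, the difference $v-u$ tends to $+\infty$ on $\partial\Omega$, hence attains an interior minimum. A standard doubling-of-variables argument combined with the comparison principle for equations without zero order term announced in the abstract should yield a contradiction: informally, at the minimum point the gradients coincide and $D^2 v \geq D^2 u$, so subtracting the two equations gives $|p|^\alpha\bigl(F(D^2u)-F(D^2v)\bigr) = \mu - c_\Omega < 0$, incompatible with ellipticity of $F$ if $p\neq 0$; the case $p=0$ has to be handled via the singular-viscosity conventions.

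For (\ref{C}), monotonicity is immediate from the $\mu^\star$-characterization, since the restriction of a subsolution on the larger domain is a subsolution on any smaller domain. Continuity with respect to $\Omega$ should follow by approximating the domain by smooth subdomains and using the a priori Lipschitz estimates depending only on $\|f\|_\infty$ (again announced in the abstract) to extract locally uniform limits of the corresponding ergodic pairs, then passing to the limit in the equation and identifying the limit constant via the characterization $c = \mu^\star$.

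For (\ref{D}), I would begin from the assumption that $\mu^\star$ is attained by some $\varphi \in C(\overline{\Omega})$. Comparing $v$ with $\varphi$ as in Step~2, the difference $v-\varphi$ tends to $+\infty$ on $\partial \Omega$, so after subtracting a constant one can arrange $v \geq \varphi$ with equality at an interior point. Applying a strong maximum principle for the difference (using again the no-zero-order comparison machinery) forces $v\equiv \varphi+C$ on the connected component, contradicting the unboundedness of $v$. The assumption $\sup_\Omega f + c_\Omega<0$ when $\alpha\neq 0$ is precisely what guarantees that $v$ (and $\varphi$) have no critical points, since at a zero-gradient point the equation would read $0 = f+c_\Omega$; absence of critical points is what makes the singular operator behave like a uniformly elliptic one so that the strong maximum principle applies. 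For strict domain monotonicity, if $\Omega'\subset\subset \Omega$ then $v|_{\overline{\Omega'}}$ is a bounded continuous subsolution of the ergodic equation on $\Omega'$ with constant $c_\Omega$, so $c_{\Omega'} \leq c_\Omega$; equality would make $v|_{\Omega'}$ an attainer of $\mu^\star(\Omega')$, contradicting the non-attainment just established on $\Omega'$ (which inherits the assumption $\sup f + c_{\Omega'}<0$ because $c_{\Omega'}\leq c_\Omega$).

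The main obstacle is the comparison step in the proof of (\ref{B}) and the strong-maximum-principle step in (\ref{D}): the singular or degenerate structure at $\nabla u = 0$, combined with the complete absence of a zero-order term, precludes off-the-shelf viscosity comparison; both arguments really do depend on the fine comparison principle without zero-order terms that the authors establish as an independent intermediate result, and on the careful convention for how test functions with vanishing gradient interact with $|\nabla u|^\alpha$.
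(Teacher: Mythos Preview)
Your truncation step for $\mu^\star\le c_\Omega$ is a genuine gap. The function $\varphi_k=\min(v,k)$ equals the constant $k$ on the set $\{v>k\}$, and there the left hand side of the equation is $0$; for $\varphi_k$ to be a subsolution one would need $f+c_\Omega\ge 0$ on that set. Nothing guarantees this, and in fact the hypothesis of part~(\ref{D}) is precisely $\sup_\Omega f+c_\Omega<0$, so in the regime where the theorem has content the truncation fails everywhere near the boundary. Consequently the plan ``prove (\ref{B}) first and read off (\ref{A})'' collapses: the other direction $\mu^\star\ge c_\Omega$ is fine, but you have no independent argument for $\mu^\star\le c_\Omega$.

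The paper reverses the order. Uniqueness (\ref{A}) is proved directly: given two ergodic pairs $(c,\varphi)$ and $(c',\varphi')$, the boundary asymptotics of Theorem~\ref{esterg} give that $\theta\varphi-\varphi'\to-\infty$ on $\partial\Omega$ for every $\theta<1$, so the maximum is interior; the viscosity comparison at that point yields $f(\bar x)+c'\le\theta^{1+\alpha}(f(\bar x)+c)$, and letting $\theta\to1$ gives $c'\le c$. With uniqueness in hand, the inequality $c\ge\mu^\star$ is obtained not by truncation but by appealing to Theorem~\ref{sympaetpas}: for any $\mu<\mu^\star$ the Dirichlet problem with datum $f+\mu$ has no solution, so Theorem~\ref{sympaetpas} produces an ergodic constant $c_{f+\mu}\ge0$ for $f+\mu$; uniqueness forces $c_\Omega=\mu+c_{f+\mu}\ge\mu$. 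Note this step genuinely uses (\ref{A}), so the order cannot be swapped.

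For (\ref{D}) your strong--maximum--principle idea is in the right spirit but is not what the paper does, and the role you assign to the hypothesis $\sup_\Omega f+c_\Omega<0$ is slightly off. The paper applies the comparison principle without zero order term (Theorem~\ref{2.4}) in $\Omega_\delta$: if $\varphi\in\mathcal C(\overline\Omega)$ attained $\mu^\star=c_\Omega$, then for every $M$ the function $\varphi+M$ is a bounded subsolution and $u$ is a solution of the same equation, with $\varphi+M\le u$ on $\partial\Omega_\delta$ for $\delta$ small, hence $\varphi+M\le u$ in $\Omega_\delta$ for all $M$, a contradiction. The condition $\sup_\Omega f+c_\Omega<0$ (when $\alpha\neq0$) is exactly the hypothesis $f\le -m<0$ required by Theorem~\ref{2.4}, not a device to exclude critical points. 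The strict monotonicity argument is the same: if $c_{\Omega'}=c_\Omega$, compare $u_{\Omega}+M$ with $u_{\Omega'}$ in $\Omega'$ via Theorem~\ref{2.4}.
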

In order to prove these results many  questions need to be addressed. 
Clearly the first one is the existence of solutions for \eqref{eq45}
when $\lambda>0$, but even though  it is fundamental, this is extraneous
to the spirit of this note and it can be found in \cite{BDL1}. The interested reader will see
that it is done through a Perron's procedure i.e. constructing sub and super solutions of \eqref{eq45}
together with a comparison principle and some Lipschitz estimates depending on the $L^\infty$ norm of the solution.
\smallskip

Theorems \ref{sympaetpas} and \ref{ABCDE} are obtained by means of several  intermediate  results, most of which are of independent 
interest.
A first fundamental tool is an interior Lipschitz estimate for solutions of equation \eqref{eq45} that does not depend directly on the $L^\infty$ norm 
of the solution but only on the norm of the zero order term.
In the linear case, these kind of estimates were obtained by Capuzzo Dolcetta, Leoni, Porretta  in \cite{CDLP}, 
and the proof we use is inspired by theirs. In order to extend the result to the present fully nonlinear singular case, we have to address
several non trivial  technical difficulties, see  Section \ref{secLipLeoni}.
After that, we give the proof of Theorem \ref{sympaetpas} in Section \ref{theo1}.

In Section \ref{exiergolambda} we focus on  existence and estimates for explosive solutions of the approximating $\lambda$--equation, i.e.  solutions $u$ of
$$
\left\{ \begin{array}{cl}
                     -| \nabla u |^\alpha F(D^2 u) + |\nabla u |^\beta   + \lambda |u|^\alpha u  = f & {\rm in} \ \Omega \\
                     u=+\infty  & { \rm on} \ \partial \Omega.
                     \end{array}\right.
$$
Here, the function $f$ is assumed to
be continuous in $\Omega$, but it is allowed to be unbounded on the boundary, as long as its growth is controlled.  This is an 
important feature that will be needed in the proof of Theorem \ref{ABCDE}.  
Construction of explosive solutions in the fully nonlinear setting includes the works by Alarc\'on, Quaas \cite{AQ}, by 
Esteban, Felmer and Quaas \cite{EFQ} and by Demengel, Goubet \cite{DG}, where only suitable zero order terms 
are considered. Capuzzo Dolcetta, Leoni and Vitolo in \cite{CDLV1, CDLV2} construct radial explosive solutions in 
some degenerate cases.
The construction we do in order to give existence and
estimates  of blowing up solutions slightly differs from the standard proof for linear operators (see Remark \ref{new}),  and 
we obtain solutions satisfying  non constant boundary asymptotics. Moreover, our proof can be carried on 
for other classes of operators, as e.g. the p--Laplacian or some generalizations such as
$$F(p, M) = |p|^\alpha (q_1 tr M + q_2 \frac{M   p\cdot p}{|p|^2})\, ,$$
 with  $q_1>0$ and $q_1+q_2>0$. 
When $p>2$, using the variational form of the p--Laplacian, and its linearity with respect to the Hessian, 
Leonori and Porretta proved in \cite{LP} such estimates and existence results. So our result for $\alpha<0$ covers the case $p<2$ that was not considered there.  

Uniqueness of solution is a sensitive matter for degenerate elliptic equations, nonetheless, when $\alpha\geq 0$ i.e. in the degenerate case, uniqueness of explosive solutions is proved for any $\beta$. Instead, in the singular case, i.e. when $\alpha<0$, we have some restriction on $\beta$ and on the behavior of $f$ near the boundary.
In a forthcoming paper, \cite{BDL2},  we prove  some $W^{2,p}$ estimates for any $p$ when $f$ is continuous   in the  case $\alpha \leq 0$, which give  ${ \cal C}^{1, \eta}$ estimates.   This provides the uniqueness of blowing up solutions  when $\alpha \leq 0$, without any restriction on $\beta$ and  $f$.

\medskip
As we have seen when comparing the ergodic constant with the principal eigenvalue, in some sense the forcing term 
$f$ in \eqref{eq3} plays the role of the zero order term for linear problems, hence it is not surprising that for $f<0$ the 
comparison principle holds  even if the operator is degenerate and the equation is not proper. This is the spirit
of the comparison principle that we prove in Section \ref{compzero}. The change of equation that allows to prove the 
comparison principle of Theorem \ref{2.4} is sort of standard, but the computation which follows  is original 
and ad hoc for our setting. The work of Leonori, Porretta and Riey \cite{LPR} has been a source of inspiration.
  
Finally,  in Section  \ref{sectionerg}, after proving the existence of ergodic constants and  estimating the ergodic functions near the boundary, we
 complete the proof of 
Theorem  \ref{ABCDE}. 
\bigskip

{\bf Notations} 
\begin{itemize}
\item We use $d(x)$ to denote a  ${\cal C}^2$ positive function in $\Omega$ with coincides with the distance function from the boundary in a neighborhood of 
$\partial \Omega$

\item For $\delta>0$, we  set $\Omega_\delta = \{ x\in \Omega\, :\, d(x)>\delta \}$

\item We denote by $\mathcal{M}^+, \mathcal{M}^-$ the Pucci's operators with ellipticity constants $a, A$, namely, for all $M\in {\cal S}$,
$$
\begin{array}{c}
\mathcal{M}^+(M)= A\, {\rm tr}(M^+)-a\, {\rm tr}(M^-)\\[1ex]
\mathcal{M}^-(M)= a\, {\rm tr}(M^+)-A\, {\rm tr}(M^-)
\end{array}
$$
and we often use that, as a consequence of \eqref{unifel}, for all $M, N\in {\cal S}$ one has
$$
\mathcal{M}^-(N) \leq F(M+N)-F(M)\leq \mathcal{M}^+(N)
$$
\end{itemize}

 \section{A priori Lipschitz-type estimates }\label{secLipLeoni}

In the note \cite{BDL1}, we prove the following result

\begin{theo}\label{existence} Assume that $f$ is bounded and continuous in $\Omega$. Then, for any $\lambda>0$, there
exists a unique  solution $u_\lambda \in C(\overline \Omega)$ of \eqref{eq45}, which is  Lipschitz continuous up to the boundary, and satisfies 
$$|u_\lambda|_{W^{1, \infty} ( \Omega)} \leq C( |u_\lambda|_\infty,  |f-\lambda |u_\lambda|^\alpha u_\lambda |_\infty, a, A,\alpha, \beta ).$$ 
\end{theo}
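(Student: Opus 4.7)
The plan is to obtain Theorem \ref{existence} by Perron's method, combined with a doubling-variable argument for Lipschitz regularity. Because the zero order term $\lambda|u|^\alpha u$ is strictly monotone in $u$, the problem is proper, so a comparison principle is available by standard arguments in the Birindelli--Demengel viscosity framework; this gives uniqueness, and also allows Perron's construction to close once sub- and super-solutions are exhibited.

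First I would build ordered barriers. A constant $\bar c>0$ with $\lambda\bar c^{\,\alpha+1}\geq |f|_\infty$ is a supersolution of the equation in $\Omega$, and $-\bar c$ is a subsolution, so it remains only to adjust these to meet the boundary data $u=0$. For this I would use the $\mathcal{C}^2$-regularity of $\partial\Omega$ to introduce the smooth distance-like function $d(x)$ from the boundary and try $\bar u(x)=K(1-e^{-\mu d(x)})$ (and its negative for the subsolution), with $\mu,K$ large. In a tubular neighborhood of $\partial\Omega$ the gradient does not vanish, so $|\nabla\bar u|^\alpha$ is classical and one may use the uniform ellipticity \eqref{unifel} together with the quadratic decay of $\mu^2 e^{-\mu d}$ in $D^2\bar u$ to absorb both $|\nabla \bar u|^\beta$ and $f$; away from the boundary one patches with the constant $\bar c$ via the standard max/min operation on viscosity sub/supersolutions. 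Having a sub- and a supersolution equal to $0$ on $\partial\Omega$, Perron's method (in the form adapted by Birindelli--Demengel to handle the singular case $\alpha<0$) then produces a continuous viscosity solution $u_\lambda$, and comparison gives uniqueness.

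The $W^{1,\infty}$ bound I would prove in two steps. For the global interior Lipschitz estimate, I would use the Ishii--Lions doubling method: double the variable, penalize with $L\,\omega(|x-y|)$ for a concave modulus $\omega$ (typically $\omega(r)=r-r^{1+\tau}/C_0$), suppose the penalized function attains a positive maximum, and use the Jensen--Ishii lemma to produce matrices $X,Y$ satisfying the two viscosity inequalities. Sandwiching $F$ between the Pucci operators $\mathcal{M}^\pm$ and exploiting the concavity of $\omega$ yields a negative eigenvalue for $X-Y$ of order $L\omega''(|x-y|)$, which beats the contribution of the first order term $|\nabla|^\beta$ (since $\beta\leq\alpha+2$) and of the zero order term (through $|f-\lambda|u|^\alpha u|_\infty$) for $L$ large; this is where the assumption $\alpha+1<\beta\leq\alpha+2$ enters crucially. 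For the up-to-the-boundary Lipschitz estimate, I would combine the interior estimate with the boundary barriers $\pm K(1-e^{-\mu d})$, which provide a linear modulus at $\partial\Omega$.

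The main obstacle is the singular regime $\alpha<0$, where the nonlinear operator is not defined at points where $\nabla u$ vanishes, so one must work with the generalized viscosity notion of \cite{BD1} throughout. In the doubling argument this complicates the treatment of the diagonal $x=y$ (where the penalization forces $\nabla\varphi=0$), and in the Perron construction one must check that the usual envelopes remain valid sub/supersolutions in the singular sense. These points are technically delicate but, as indicated in the paper, were settled in the companion note \cite{BDL1}.
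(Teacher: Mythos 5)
A preliminary remark: the paper does not actually prove Theorem \ref{existence} here --- it is imported from the companion note \cite{BDL1}, and the only guidance the present text gives is the one-line description in the introduction (Perron's method via sub- and supersolutions, the comparison principle of Theorem \ref{comp}, and Lipschitz estimates depending on $|u|_\infty$). Your outline follows precisely that strategy, so structurally it is the intended proof; all one can check is whether the individual steps you propose would go through.

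One of them would not, as written: the subsolution barrier in the borderline case $\beta=\alpha+2$. For the subsolution the gradient term $+|\nabla u|^\beta$ has the unfavourable sign, and for $w=-K(1-e^{-\mu d})$ the required inequality near $\partial\Omega$ reads
$$(K\mu e^{-\mu d})^{\alpha+1}\left[-a\mu+A|D^2 d|_\infty+(K\mu e^{-\mu d})^{\beta-\alpha-1}\right]\ \le\ f .$$
When $\beta<\alpha+2$ the exponent $\beta-\alpha-1$ is $<1$ and taking $\mu$ large settles it for any fixed $K$; but when $\beta=\alpha+2$ the bracket forces $Ke^{-\mu d}<a$ at $d=0$, i.e.\ $K<a$ up to lower order terms, while patching $w$ with the global constant subsolution $-\bar c=-(|f|_\infty/\lambda)^{1/(\alpha+1)}$ requires $K\ge K(1-e^{-\mu\delta})\ge\bar c$. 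These two requirements are incompatible as soon as $|f|_\infty/\lambda$ is large, which is exactly the regime exploited later when $\lambda\to0$. The standard repair is to replace the exponential profile by a logarithmic one, $w=-c_1\log(1+c_2 d)$ with $c_1$ small depending only on $a,A,\alpha$ and $c_2$ large: there the quadratic-in-gradient term is absorbed through the coefficient $c_1(c_1-a)<0$ independently of how deep the barrier goes, and the depth $c_1\log(1+c_2\delta)$ can still be made larger than $\bar c$. (Your supersolution $K(1-e^{-\mu d})$ is unaffected, since for it $|\nabla u|^\beta$ has the helpful sign, and your comparison and Ishii--Lions blocks are the expected ones.) The remaining delicate points you flag --- the singular notion of solution for $\alpha<0$ in the Perron envelopes and in the two doubling arguments --- are exactly what \cite{BDL1} is invoked for, so they cannot be adjudicated against the present text; your description of them is consistent with what the authors say is done there.
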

This is obtained, through Perron's method, by constructing sub and super solutions   and using the following general comparison principle.

\begin{theo}\label{comp} Suppose that $b$ is Lipschitz continuous in $\Omega$, 
$\zeta:\R \rightarrow\R $ is a non decreasing function and  $f$ and $g$ are continuous in  $\Omega$.
Let $u$ be a bounded by above viscosity sub solution of 
      $$-|\grad u|^\alpha F(D^2 u)+ b(x)|\nabla u|^{\beta}+ \zeta (u) \leq  g$$
and let $v$ be a  bounded by below viscosity super solution of 
       $$-|\grad v|^\alpha F(D^2 v)+ b(x)|\nabla v|^{\beta} + \zeta (v)\geq  f.$$
If either $g \leq f$ and $\zeta $ is increasing or $g < f$ then,
 $u\leq v$ on $\partial\Omega$  implies that 
      $u \leq v$ in $\Omega$. 
\end{theo}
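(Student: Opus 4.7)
The plan is to argue by contradiction using the Jensen--Ishii doubling-of-variables method, adapted to the singular/degenerate character of $G(\nabla u,D^2u) = -|\nabla u|^\alpha F(D^2u) + b(x)|\nabla u|^\beta$ in the viscosity-solution sense of \cite{BD1}. Assume $M := \sup_{\overline\Omega}(u-v) > 0$; since $u \leq v$ on $\partial\Omega$ and $u-v$ is upper semicontinuous on $\overline\Omega$, $M$ is attained at some $\bar x \in \Omega$.

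For $\varepsilon > 0$ and an exponent $q$ chosen large in terms of $\alpha,\beta$, I would consider the penalization
\[
\Psi_\varepsilon(x,y) = u(x) - v(y) - \tfrac{1}{q\varepsilon^{q-1}}|x-y|^q,
\]
with maximum point $(x_\varepsilon,y_\varepsilon)$ on $\overline\Omega\times\overline\Omega$. Standard arguments yield $x_\varepsilon,y_\varepsilon \to \bar x$, $|x_\varepsilon - y_\varepsilon|^q/\varepsilon^{q-1} \to 0$, $u(x_\varepsilon) - v(y_\varepsilon) \to M$, and both points interior for small $\varepsilon$. Along the subsequence with $x_\varepsilon \neq y_\varepsilon$, the common test gradient $p_\varepsilon := |x_\varepsilon - y_\varepsilon|^{q-2}(x_\varepsilon - y_\varepsilon)/\varepsilon^{q-1}$ is non-zero, and Ishii's maximum principle for semicontinuous functions provides symmetric matrices $X_\varepsilon, Y_\varepsilon$, admissible test matrices for $u$ at $x_\varepsilon$ (subjet) and $v$ at $y_\varepsilon$ (superjet), together with the usual matrix inequality bounding $X_\varepsilon - Y_\varepsilon$ via $D^2\Psi_\varepsilon$. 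Inserting into the two viscosity inequalities and subtracting yields
\[
|p_\varepsilon|^\alpha(F(Y_\varepsilon) - F(X_\varepsilon)) + (b(x_\varepsilon) - b(y_\varepsilon))|p_\varepsilon|^\beta + \zeta(u(x_\varepsilon)) - \zeta(v(y_\varepsilon)) \leq g(x_\varepsilon) - f(y_\varepsilon).
\]
Uniform ellipticity combined with the Ishii matrix inequality controls the first term, and Lipschitz continuity of $b$ bounds the second by $\Lip(b)|x_\varepsilon - y_\varepsilon|\,|p_\varepsilon|^\beta$. A careful choice of $q$, large enough in $\alpha,\beta$, makes both error contributions vanish as $\varepsilon\to 0$, so that continuity of $\zeta,f,g$ gives $\zeta(u(\bar x)) - \zeta(v(\bar x)) \leq g(\bar x) - f(\bar x)$. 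Under either dichotomy hypothesis -- $\zeta$ strictly increasing with $g\leq f$, or $\zeta$ merely nondecreasing but $g < f$ -- this contradicts $u(\bar x) > v(\bar x)$.

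The main obstacle is twofold. First, when $\alpha < 0$ the operator is undefined at $p=0$, so the degenerate subcase $x_\varepsilon = y_\varepsilon$ (with $p_\varepsilon = 0$) must be handled using the refined notion of viscosity solution of \cite{BD1}, where test functions with vanishing gradient are admissible only on neighbourhoods of local constancy of the solution; this reduces the inequality at such touching points to the zero-order comparison alone, and the dichotomy again closes the argument. Second, when $\beta > 1$ the penalization gradient $|p_\varepsilon|$ can blow up as $\varepsilon\to 0$, and a delicate calibration of $q$ -- together, if needed, with sup-/inf-convolution regularization of $u, v$ to gain a uniform local $|p_\varepsilon|$ bound -- is required to ensure that the first-order error $(b(x_\varepsilon)-b(y_\varepsilon))|p_\varepsilon|^\beta$ really tends to zero in the limit. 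These two quantitative points constitute the technical heart of the proof.
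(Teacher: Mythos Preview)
The paper does not prove Theorem~\ref{comp}: immediately after its statement it writes ``For the proofs of the above results we refer to \cite{BDL1}.'' So there is no argument in the present paper to compare against.

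That said, your outline is the expected one, and the two obstacles you single out are exactly the real ones. The doubling-of-variables scheme with penalty $|x-y|^q/(q\,\varepsilon^{q-1})$, the Ishii matrix inequality giving $X_\varepsilon\le Y_\varepsilon$ (hence the second-order term has the right sign), and the passage to the limit on the zero-order terms are all standard. What is genuinely delicate is precisely what you flag. First, for $\alpha<0$ the possibility $x_\varepsilon=y_\varepsilon$ must be excluded or treated separately via the adapted notion of viscosity solution from \cite{BD1}; note that even in the proof of the related Theorem~\ref{2.4} the paper defers this point to \cite{BDL1}. Second, when $\beta>1$ the error $|b(x_\varepsilon)-b(y_\varepsilon)|\,|p_\varepsilon|^\beta \le \Lip(b)\,|x_\varepsilon-y_\varepsilon|\,|p_\varepsilon|^\beta$ does \emph{not} vanish from the bare information $|x_\varepsilon-y_\varepsilon|^q/\varepsilon^{q-1}\to 0$, regardless of the choice of $q$: one computes that the bound behaves like $\varepsilon^{(q-1)(1-\beta)/q}$, which diverges. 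Some a~priori control on $|p_\varepsilon|$ is therefore needed. Your suggestion of sup/inf-convolutions is a workable route, but be aware that one must check carefully that the regularised functions remain sub/super-solutions of suitably perturbed equations (the $x$-dependence of $b$, the nonlinearity $|\nabla u|^\beta$, and the zero-order term $\zeta(u)$ all require attention). In the companion Theorem~\ref{2.4} the paper instead \emph{assumes} one of $u,v$ Lipschitz to secure the bound on $p_j$; Theorem~\ref{comp} has no such hypothesis, so the regularisation step you propose (or an equivalent device) is indeed where the work lies.
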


For the proofs of the above results we refer to \cite{BDL1}. 

The rest of this section is devoted to prove  a priori Lipschitz estimates 
 for  solutions of the equation
\begin{equation}\label{eqq1}
-|\nabla u|^\alpha F(D^2 u) + |\nabla u|^{\beta} + \lambda |u|^\alpha u = f \, ,
\end{equation}
that depend on $\lambda |u|_\infty^{\alpha +1}$, but  not on $|u|_\infty$.
Our estimates will be a consequence of the following  result, in which we denote by $B$ the 
unit ball centred at the origin in $\R^N$.
 
  \begin{prop}\label{lipunif}  Let $F$ satisfy \eqref{unifel} and, for $\lambda\geq 0, \alpha>-1$ and $\beta>\alpha+1$, let $u$ and $v$ be respectively a bounded  sub solution and a bounded from below   super solution of equation \eqref{eqq1} in $B$, with $f$ Lipschitz continuous in $B$. Then, for any positive $p\geq \frac{(2+\alpha-\beta)^+}{\beta-\alpha-1}$, there exists a positive constant $M$, depending only on $p, \alpha, \beta, a, A, N, \|f-\lambda |u|^\alpha u\|_\infty$ and on the Lipschitz constant of $f$, such that, for all $x, y\in B$ one has
  $$
 u(x)-v(y)\leq \sup_B(u-v)^+ + M \frac{|x-y|}{(1-|y|)^{\frac{\beta+\alpha^-}{\beta-\alpha-1}}} \left[ 1+\left( \frac{|x-y|}{(1-|x|)}\right)^p\right]
 $$
 \end{prop}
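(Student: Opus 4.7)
The plan is to argue by contradiction through a variable-doubling argument in the spirit of Capuzzo Dolcetta--Leoni--Porretta \cite{CDLP}, using a penalization specifically designed so that the coercive term $|\nabla u|^\beta$ can control the singular diffusion $|\nabla u|^\alpha F(D^2 u)$. Setting $A = \frac{\beta+\alpha^-}{\beta-\alpha-1}$ and
$$
\phi(x,y) = \frac{|x-y|}{(1-|y|)^{A}}\left[1+\left(\frac{|x-y|}{1-|x|}\right)^p\right],
$$
we suppose the desired estimate fails for some $M>0$. The auxiliary functional
$$
\Psi(x,y) = u(x)-v(y) - \sup_B(u-v)^+ - M\,\phi(x,y)
$$
then has a strictly positive supremum on $\overline{B}\times\overline{B}$, attained at an interior point $(\bar x, \bar y)$ since $\phi$ blows up at the boundary. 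As $\phi(x,x) = 0$, strict positivity of the supremum forces $\bar x \neq \bar y$; in particular $u(\bar x) > v(\bar y)$.

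At $(\bar x,\bar y)$ we apply the Crandall--Ishii lemma within the viscosity-solution framework of \cite{BD1} for singular operators. Since $\bar x \neq \bar y$, the momenta $\xi := M\nabla_x\phi(\bar x,\bar y)$ and $\eta := -M\nabla_y\phi(\bar x,\bar y)$ are both nonzero, so $F$ is never evaluated at a degenerate gradient. The lemma produces matrices $X,Y$ whose difference is controlled by $MD^2\phi(\bar x,\bar y)$. Writing the viscosity sub/super-solution inequalities for $u$ and $v$, subtracting them, and bounding $f(\bar x)-f(\bar y)$ by $L_f|\bar x - \bar y|$, we obtain
$$
|\xi|^\beta - |\eta|^\beta + \lambda\bigl(|u(\bar x)|^\alpha u(\bar x) - |v(\bar y)|^\alpha v(\bar y)\bigr) \leq |\xi|^\alpha F(X) - |\eta|^\alpha F(Y) + L_f|\bar x - \bar y|.
$$
Since $u(\bar x) > v(\bar y)$ and $s\mapsto |s|^\alpha s$ is strictly increasing for $\alpha > -1$, the $\lambda$-term on the left is nonnegative and can be dropped; the resulting inequality is free of both $\lambda$ and $|u|_\infty$.

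It remains to quantify each term using $\phi$. A direct calculation shows that $|\xi|$ and $|\eta|$ are both comparable to $M\bigl[(1-|\bar y|)^{-A} + |\bar x-\bar y|^p(1-|\bar y|)^{-A}(1-|\bar x|)^{-p}\bigr]$, while $M\|D^2\phi(\bar x,\bar y)\|$ is bounded by a constant times $M|\bar x-\bar y|^{-1}(1-|\bar y|)^{-A}$ plus comparable boundary corrections coming from differentiating the factor $(1-|x|)^{-p}$. The exponent $A = (\beta+\alpha^-)/(\beta-\alpha-1)$ is precisely what makes the ratio $|\xi|^{\alpha}\,M\|D^2\phi\|/|\xi|^\beta$ decay like $M^{-(\beta-\alpha-1)}$ uniformly in $\bar x, \bar y$, and the threshold $p\geq(2+\alpha-\beta)^+/(\beta-\alpha-1)$ produces the analogous domination for the higher-order term generated by the bracket in $\phi$. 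Choosing $M$ large enough, depending only on $p,\alpha,\beta,a,A,N,\|f-\lambda|u|^\alpha u\|_\infty$ and the Lipschitz constant of $f$, makes the coercive term $|\xi|^\beta$ strictly outbalance every other contribution, producing the sought contradiction.

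The hardest part will be executing the detailed Hessian computation for $\phi$ and verifying that the singular regime $\alpha < 0$, in which $|\xi|^\alpha F(X)$ is inflated as $|\xi|\to\infty$, is properly absorbed by the presence of $\alpha^-$ in the exponent $A$. The lower bound $p \geq (2+\alpha-\beta)^+/(\beta-\alpha-1)$ arises precisely from balancing the second-order cost of the correction factor $[|x-y|/(1-|x|)]^p$ against its own gradient growth; below this threshold the correction generates Hessian terms that $|\xi|^\beta$ can no longer dominate.
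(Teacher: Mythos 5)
Your skeleton (doubling with a weighted penalization $\phi$, contradiction at an interior maximum with $\bar x\neq\bar y$, Ishii's lemma) matches the paper's, but the step where you actually derive the contradiction does not work as described, and it is exactly here that the paper's proof has its one essential idea. After subtracting the sub- and supersolution inequalities, the coercive term appears only as the \emph{difference} $|\xi|^\beta-|\eta|^\beta$; since, as you yourself observe, $|\xi|$ and $|\eta|$ are comparable, this difference is of lower order, namely $O\bigl(|\xi|^{\beta-1}\bigl||\xi|-|\eta|\bigr|\bigr)$, and provides no coercivity at all. There is therefore no standalone $|\xi|^\beta$ on the left to ``outbalance every other contribution'', no matter how large $M$ is. The paper extracts coercivity differently: using the positive $1$-homogeneity of $F$ it writes $(1+t)F(|\nabla_x\phi|^\alpha X_\epsilon)$ for an auxiliary parameter $t>0$ and substitutes the subsolution equation into the extra $t\,F(\cdot)$, producing a genuine term $t\,|\nabla_x\phi|^\beta$ on the left (this is also where the dependence on $\|f-\lambda|u|^\alpha u\|_\infty$ enters --- a dependence your argument never produces, which is itself a warning sign); the Ishii matrix inequality is then applied to the combination $(1+t)|\nabla_x\phi|^{\alpha}X_\epsilon+|\nabla_y\phi|^{\alpha}Y_\epsilon$, yielding contributions of order $t^2/|x-y|$ and $t$, and finally $t$ is optimized, $t\propto |\nabla_x\phi|^{\beta-\alpha}d(y)^\tau|x-y|/\bigl(k(1+\xi^p)\bigr)$, to absorb the $t^2/|x-y|$ singularity. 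Relatedly, your claim that $|\xi|^{\alpha}M\|D^2\phi\|/|\xi|^\beta$ decays like $M^{-(\beta-\alpha-1)}$ \emph{uniformly} in $(\bar x,\bar y)$ is false: the Hessian bound carries a factor $1/|\bar x-\bar y|$ and $|\bar x-\bar y|$ has no positive lower bound at the maximum point, so this ratio is unbounded; only the cancellation of the $1/|x-y|$ blocks in the correctly weighted sum of $X_\epsilon$ and $Y_\epsilon$, combined with the $t$-optimization, removes it.

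A second, independent gap is the singular case $\alpha<0$, which you defer to ``the hardest part'' and propose to absorb via the exponent $\alpha^-$ in $A$. The exponent alone does not do it: for $\alpha<0$ the weighting of the Ishii inequality by $|\nabla_x\phi|^{\alpha/2}$ and $|\nabla_y\phi|^{\alpha/2}$ degenerates, and the paper instead invokes the reformulation from \cite{BDcocv}, by which sub/supersolutions of \eqref{eqq1} are sub/supersolutions of $-F(D^2u)+|\nabla u|^{\beta-\alpha}+\lambda|\nabla u|^{-\alpha}|u|^\alpha u\le |\nabla u|^{-\alpha}f$, i.e.\ the equation multiplied through by $|\nabla u|^{-\alpha}$, and then runs the $t$-argument on this non-singular equation. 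Without these two ingredients the proposal does not close.
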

\begin{proof}
We argue as in the proof of Theorem 3.1  in \cite{CDLP}. 
     
Let us define a "distance" function $d$ which  equals $1-|x|$ near the boundary and it is extended in $B$ as a $\mathcal{C}^2$
function   satisfying,  for some constant $c_1>0$,
      $$\left\{ \begin{array}{cc}
      d(x) = 1-|x| & {\rm if}  \ |x| > \frac{1}{2}\\[1ex]
        \frac{1-|x|}{ 2} \leq d(x) \leq 1-|x| & {\rm  for\  all} \ x\in \bar B  \\[1ex]
         |Dd (x) | \leq 1\, ,\ -c_1 I_N \leq  D^2 d (x) \leq 0 & {\rm for \ all} \ x\in \bar B
         \end{array} \right.$$
 For   $\xi =\frac {|x-y|}{d(x)}$,  we consider  the function 
     $$\phi(x, y) = \frac{k}{d(y)^\tau} |x-y| \left(L+ \xi^p \right)+\sup_B(u-v)^+ $$
where $p>0$ is a fixed exponent satisfying $p\geq \frac{(2+\alpha-\beta)^+}{\beta-\alpha-1}$, $\tau=\frac{\beta+\alpha^-}{\beta-\alpha-1}$ and $L$, $k$ are suitably large positive constants to be  chosen  later.
   
 The statement is proved if we show that  for all $(x,y) \in  B^2$ one has
      $$ u(x)-u(y) \leq \phi(x, y)\, .$$
    
By contradiction, let us assume that  $u(x)-u(y)-\phi(x,y)  >0$ somewhere.  Then,  necessarily
          the supremum is  achieved on a pair $(x,y)$ with  $x \neq y$ and  $d(x)\, , d(y) >0$. Using Ishii's Lemma of \cite{I1}, one gets that on such a point $(x,y)$,  for all $\epsilon>0$, there exist symmetric matrices $X_\epsilon $ and $Y_\epsilon $  such that 
 \begin{equation} \label{eqine}
\begin{array}{c}
\displaystyle  (\grad_x  \phi, X_\epsilon ) \in J^{2,+} u( x), \ (-\grad_y \phi, -Y_\epsilon ) \in J^{2,-} v(y)\\[2ex]
\displaystyle -\left( \frac{1}{\epsilon } + |D^2 \phi|\right) I_{2N} \leq \left(\begin{array}{cc} 
X_\epsilon   & O\\
O & Y_\epsilon \end{array} \right) \leq D^2\phi + \epsilon  (D^2 \phi)^2.
\end{array}
 \end{equation}      
 We proceed in the proof by considering separately the cases $\alpha\geq 0$ and $\alpha<0$.
 \medskip
 
 {\bf The case $\alpha\geq 0$}.    Since $u$ is a sub solution and $v$ a super solution, by the positive 1--homogeneity of $F$ we have in this case
 $$
 \left\{
 \begin{array}{l}
-F\left(|\nabla_x \phi |^\alpha X_\epsilon \right) + |\nabla _x \phi |^{\beta} +\lambda |u|^\alpha u(x) \leq f(x)\\[1ex]
 -F\left(-|\nabla_y \phi |^\alpha Y_\epsilon \right) + |\nabla _y\phi |^{\beta} +\lambda |v|^\alpha v(y) \geq f(y)
 \end{array}\right.
 $$
 Subtracting the above inequalities and using also that $u(x)-v(y)>\phi (x,y)\geq 0$,  for any $t>0$ we can write
 $$
 \begin{array}{l}
 t\, F\left(|\nabla_x \phi |^\alpha X_\epsilon \right) -\left[ F\left((1+t)|\nabla_x \phi |^\alpha X_\epsilon \right) - F\left(-|\nabla_y \phi |^\alpha Y_\epsilon \right)\right]  \\[1ex]
\qquad \qquad \qquad \leq |\nabla _y\phi |^{\beta} - |\nabla _x\phi |^{\beta} +f(x)-f(y)\, ,
 \end{array}
 $$
and therefore
$$
 \begin{array}{ll}
 t\, |\nabla _x\phi |^{\beta} \leq  &   F\left((1+t)|\nabla_x \phi |^\alpha X_\epsilon \right) - F\left(-|\nabla_y \phi |^\alpha Y_\epsilon \right)\\[1ex] 
 & + |\nabla _y\phi |^{\beta} - |\nabla _x\phi |^{\beta} +t\, \left( f-\lambda |u|^\alpha u\right)^+ +f(x)-f(y)\, . 
 \end{array}
 $$
By the uniform ellipticity of $F$, it then follows that
 $$
 \begin{array}{ll}
 t\, |\nabla _x\phi |^{\beta} \leq  &   \mathcal{M}^+\left( (1+t)|\nabla_x \phi |^\alpha X_\epsilon +|\nabla_y \phi |^\alpha Y_\epsilon \right)\\[1ex] 
 & + |\nabla _y\phi |^{\beta} - |\nabla _x\phi |^{\beta} +t\, \left( f-\lambda |u|^\alpha u\right)^+ +f(x)-f(y)\, . 
 \end{array}
 $$
 
 By multiplying the right inequality of \eqref{eqine} on the left and on the right by
 $$
 \left(
 \begin{array}{cc}
 \sqrt{1+t}|\grad_x \phi|^{\alpha/2} I_N & O\\
 O & |\grad_y \phi|^{\alpha/2} I_N
 \end{array}
 \right)
 $$
 and testing the resulting inequality on vectors of the form $(v,v)$ with $v\in \R^N$, we further obtain
 $$
 (1+t) |\grad_x \phi|^\alpha X_\epsilon +|\grad_y \phi|^\alpha Y_\epsilon \leq Z_{\alpha,t} + O(\epsilon)\, ,
 $$
 with
 \begin{equation}\label{Zeta}
 Z_{\alpha,t} =  
 (1+t) |\grad_x \phi|^\alpha D^2_{xx} \phi 
 +  \sqrt{1+t} |\grad_x \phi|^{\alpha/2} |\grad_y \phi|^{\alpha/2}  \left[ D^2_{xy} \phi +\left( D^2_{xy}\phi\right)^t \right] +|\grad_y \phi|^\alpha D^2_{yy} \phi\, . 
\end{equation}
Hence,  after letting $\epsilon \to 0$, we get
 \begin{equation}\label{initial}
 t\, |\nabla _x\phi |^{\beta} 
 \leq   \mathcal{M}^+\left( Z_{\alpha,t} \right)  + |\nabla _y\phi |^{\beta} - |\nabla _x\phi |^{\beta} +t\, \left( f-\lambda |u|^\alpha u\right)^+ +f(x)-f(y)\, . 
  \end{equation}
 We now proceed by evaluating  the right hand side terms of \eqref{initial}.

An explicit computation shows that, setting $\eta = \frac{ |x-y|}{d(y)}$ and $\zeta =\frac{x-y}{|x-y|}\, ,$ one has

$$\grad_x \phi (x,y)=  \frac{k}{ d(y)^\tau}\left[ (L+ (1+ p) \xi^p) \zeta -p\,  \xi^{p+1} \nabla d (x)\right]\, , $$
 as well as
$$\grad_y \phi (x,y) = - \frac{k}{ d(y)^\tau} \left[ (L+ (1+ p) \xi^p) \zeta +\tau \, \eta \left( L + \xi^p\right) \nabla d(y)\right]\, .$$  
From now on we denote with $c$ possibly different positive constants which depend only on $p$, $N$, $a$, $A$,  $\alpha$ and $\beta$.
As discussed in \cite{CDLP}, for $L>1$ fixed suitably large  depending only on $p$, one has
 \begin{equation}\label{lbgradx}
 |\grad_x\phi|\geq c k \frac{ 1+\xi^{p+1}}{d(y)^\tau} 
  \end{equation}
  and 
 \begin{equation}\label{ubgrad}
 |\grad_x\phi|\,, \  |\grad_y\phi| \leq c k \frac{ 1+\xi^{p+1}}{d(y)^{\tau+1}}\, .
 \end{equation}
 Moreover, we notice that  one  has also
 \begin{equation}\label{lbgrady}
 |\grad_y\phi| \geq \frac{k}{d(y)^\tau} \left[ L+(1+p)\xi^p-\tau \eta (L+\xi^p)|\grad d(y)|\right]  \geq c k \frac{ 1+\xi^p}{d(y)^\tau}\qquad \hbox{ if \ } \tau\, \eta \leq \frac{1}{2}\, .
 \end{equation}
On the other hand, the second order derivatives of $\phi$ may be written as follows
 $$
 \begin{array}{c}
 D^2_{xx}\phi = \frac{k}{d(y)^\tau} \left\{ \frac{\left[ L+(1+p)\xi^p\right]}{|x-y|} B
  + p(1+p) \frac{\xi^{p-1}}{d(x)} T -p(1+p) \frac{ \xi^p}{d(x)} \left( C+C^t\right)\right. \\[2ex]
 \qquad  \qquad \left. +p(1+p) \frac{\xi^{p+1}}{d(x)} \grad d(x)\otimes \grad d(x) -p\,  \xi^{p+1}D^2 d(x)\right\}
 \end{array}
 $$
 $$
\begin{array}{c}
 D^2_{xy}\phi = - \frac{k}{d(y)^\tau} \left\{ \frac{\left[ L+(1+p)\xi^p\right]}{|x-y|} B
  + p(1+p) \frac{\xi^{p-1}}{d(x)} T -p(1+p) \frac{ \xi^p}{d(x)} C^t \right. \\[2ex]
 \qquad  \qquad \quad \left. + \frac{\tau\,  \left[ L+(1+p)\xi^p\right]}{d(y)} E - \frac{ \tau \, p\, \xi^{p+1}}{d(y)} \grad d(x)\otimes \grad d(y)\right\}
 \end{array}  $$
  $$
\begin{array}{c}
 D^2_{yy}\phi = \frac{k}{d(y)^\tau} \left\{ \frac{\left[ L+(1+p)\xi^p\right]}{|x-y|} B
  + p(1+p) \frac{\xi^{p-1}}{d(x)} T + \frac{\tau\, \left[ L+(1+p)\xi^p\right] }{d(y)} \left( E+E^t\right)\right. \\[2ex]
 \qquad  \qquad \left. + \frac{\tau\, (\tau+1)\, \eta (L+\xi^p)}{d(y)} \grad d(y)\otimes \grad d(y) - \tau\, \eta\, (L+\xi^p) D^2 d(y)\right\}
 \end{array}
$$
with $B=I_N -\zeta \otimes \zeta\,,\ T=\zeta \otimes \zeta\, ,\ C=\zeta \otimes \grad d(x)$ and $E=\zeta \otimes \grad d(y)\, .$ 

Therefore, the matrix $Z_{\alpha,t}$ defined in \eqref{Zeta} is given by
$$
\scriptstyle{\begin{array}{l}
Z_{\alpha,t} = \frac{k}{d(y)^\tau} \left\{ \left( \sqrt{1+t}|\grad_x \phi|^{\alpha/2}-|\grad_y \phi|^{\alpha/2}\right)^2 \left[ \frac{(L+(1+p)\xi^p)}{|x-y|} B
 +p(1+p)\frac{\xi^{p-1}}{d(x)}T\right]\right.\\[2ex]
\qquad  \qquad - \sqrt{1+t} |\grad_x \phi|^{\alpha/2} \left( \sqrt{1+t}|\grad_x \phi|^{\alpha/2}-|\grad_y \phi|^{\alpha/2}\right) \frac{p(1+p)\xi^p}{d(x)}\left( C+C^t\right)\\[2ex]
\qquad  \qquad  - |\grad_y\phi|^{\alpha/2} \left( \sqrt{1+t}|\grad_x \phi|^{\alpha/2}-|\grad_y \phi|^{\alpha/2}\right) \frac{\tau (L+(1+p)\xi^p)}{d(y)} \left( E+E^t\right)\\[2ex]
\qquad  \qquad +p(1+t)|\grad_x \phi|^\alpha \left[  \frac{(1+p)\xi^{p+1}}{d(x)} \grad d(x)\otimes \grad d(x)
-\xi^{p+1} D^2 d(x)\right]\\[2ex]
\qquad  \qquad + \sqrt{1+t}|\grad_x \phi|^{\alpha/2} |\grad_y \phi|^{\alpha/2}  \frac{\tau p \xi^{(p+1)}}{d(y)}\left[  \grad d(x)\otimes \grad d(y) +\grad d(y)\otimes \grad d(x) \right]\\[2ex]
\qquad  \qquad \left. + \tau\, |\grad_y \phi|^\alpha \left[  \frac{ (1+\tau )\eta (L+\xi^p)}{d(y)} \grad d(y)\otimes \grad d(y)- \eta (L+\xi^p) D^2 d(y)\right] \right\}\, ,
 \end{array}}
 $$
and, recalling that $\xi=|x-y|/d(x)$ and that $d<1$ in $B$, this yields the estimate
$$
\begin{array}{l}
\mathcal{M}^+ (Z_{\alpha,t})\leq \frac{c\, k}{d(y)^\tau} \left\{ \left( \sqrt{1+t}|\grad_x \phi|^{\alpha/2}-|\grad_y \phi|^{\alpha/2}\right)^2   \frac{1+\xi^p}{|x-y|}\right.\\[2ex]
\qquad \qquad + \sqrt{1+t}|\grad_x \phi|^{\alpha/2}\left|  \sqrt{1+t}|\grad_x \phi|^{\alpha/2}-|\grad_y\phi|^{\alpha/2}\right| \frac{\xi^p}{d(x)}\\[2ex]
\qquad \qquad + |\grad_y \phi|^{\alpha/2}\left|  \sqrt{1+t}|\grad_x \phi|^{\alpha/2}-|\grad_y\phi |^{\alpha/2}\right| \frac{1+\xi^p}{d(y)}\\[2ex]
\qquad \qquad  \left. +(1+t)|\grad_x \phi|^\alpha   \frac{\xi^{p+1}}{d(x)} + \sqrt{1+t}|\grad_x \phi|^{\alpha/2} |\grad_y \phi|^{\alpha/2}\frac{\xi^{p+1}}{d(y)} + |\grad_y \phi|^\alpha \eta \frac{1+\xi^p}{d(y)} \right\} .
\end{array}
$$
By observing that
$$
\left| \sqrt{1+t}|\grad_x \phi|^{\alpha/2}-|\grad_y \phi|^{\alpha/2}\right| \leq   (\sqrt{t+1}-1)  |\grad_x \phi|^{\alpha/2} +\left| |\grad_x \phi|^{\alpha/2}-|\grad_y \phi|^{\alpha/2}\right| 
$$
and  by   applying the trivial inequalities  $\sqrt{1+t}-1\leq  t$, $\sqrt{1+t} \left( \sqrt{t+1}-1\right)\leq t$, $\sqrt{1+t}\leq 1+t$, after rearranging terms we then deduce
\begin{equation}\label{ubmpiu1}
\begin{array}{l}
\mathcal{M}^+ (Z_{\alpha,t})\leq \frac{c\, k}{d(y)^\tau} \left\{  t^2 |\grad_x \phi|^\alpha  \frac{1+\xi^p}{|x-y|}\right. \\[2ex]
\qquad \qquad \qquad \quad + t\, \left[ |\grad_x \phi|^\alpha  \frac{\xi^p+\xi^{p+1}}{d(x)} + |\grad_x \phi|^{\alpha/2} |\grad_y \phi|^{\alpha/2} \left( \frac{1+\xi^p+\xi^{p+1}}{d(y)} +\frac{\xi^p}{d(x)}\right) \right]  \\[2ex]
\qquad \qquad \qquad \quad+ \left( |\grad_x \phi|^{\alpha/2}-|\grad_y \phi|^{\alpha/2}\right)^2  \frac{1+\xi^p}{|x-y|}
\\[2ex]
\qquad \qquad \qquad \quad+ \left| |\grad_x \phi|^{\alpha/2}-|\grad_y \phi|^{\alpha/2}\right|  \left(  |\grad_x \phi|^{\alpha/2}  \frac{\xi^p}{d(x)}
 + |\grad_y \phi|^{\alpha/2}  \frac{1+\xi^p}{d(y)}\right)   \\[2ex]
 \qquad \qquad  \qquad  \quad \left. + |\grad_x \phi|^\alpha   \frac{\xi^{p+1}}{d(x)} +|\grad_y \phi|^\alpha\frac{(1+\xi^p)\, \eta}{d(y)}  
+  |\grad_x \phi|^{\alpha/2}  |\grad_y \phi|^{\alpha/2}  \frac{\xi^{p+1}}{d(y)} \right\}
\end{array}
\end{equation}
We now recall that, as proved in \cite{CDLP}, for all $q, \gamma>0$ one has
\begin{equation}\label{coeff}
\frac{\xi^q}{d(x)^\gamma}\leq 2^\gamma \frac{1+\xi^{q+\gamma}}{d(y)^\gamma}\, .
\end{equation}
Moreover, if   $\alpha\geq 2$,   the mean value  theorem,   the bounds \eqref{ubgrad}, \eqref{coeff}  and the explicit 
expression of $\grad_x \phi +\grad_y \phi$ imply that
$$
\begin{array}{rl}
\left| |\grad_x \phi|^{\alpha/2}-|\grad_y \phi|^{\alpha/2}\right| 
\leq  & c\, \max \left\{ |\grad_x \phi|^{\alpha/2-1}, |\grad_y \phi|^{\alpha/2-1}\right\} \left| \grad_x \phi+\grad_y \phi\right|\\[2ex]
\leq & \displaystyle c k^{\alpha/2} \frac{(1+\xi^{p+1})^{\alpha/2-1}}{d(y)^{\alpha/2(\tau+1) -1}} \left( \frac{\xi^p}{d(x)}+\frac{1+\xi^p}{d(y)}\right) |x-y|\\[2ex]
\leq  & \displaystyle c \left[ k\frac{(1+\xi^{p+1})}{d(y)^{\tau+1}}\right]^{\alpha/2} |x-y|
\end{array}
$$   
Analogously, if $\alpha<2$ but $\tau \, \eta \leq 1/2$, from \eqref{lbgradx}, \eqref{lbgrady} and again \eqref{coeff} we deduce
$$
 \left| |\grad_x \phi|^{\alpha/2}-|\grad_y \phi|^{\alpha/2}\right| \leq  c   \left[ k\frac{(1+\xi^p)}{d(y)^\tau}\right]^{\alpha/2-1}   k \frac{(1+\xi^{p+1})}{d(y)^{\tau+1}}|x-y| 
 $$  
 and therefore, since $\xi\leq \frac{1}{2\tau -1}$ for $\eta\leq \frac{1}{2\tau}$, we obtain in this case
 $$
  \left| |\grad_x \phi|^{\alpha/2}-|\grad_y \phi|^{\alpha/2}\right| \leq  c  \left[ k\frac{(1+\xi^{p+1})}{d(y)^{\tau+2/\alpha}}\right]^{\alpha/2} |x-y|\, .
  $$
  Finally, if $\alpha<2$ and $\tau\, \eta >1/2$, that is $|x-y|>d(y)/2\tau$, we have
  $$
 \begin{array}{l}
  \displaystyle \left| |\grad_x \phi|^{\alpha/2}-|\grad_y \phi|^{\alpha/2}\right| \\[2ex]
  \displaystyle \qquad \leq      \left| \grad_x \phi+\grad_y \phi\right|^{\alpha/2} \leq
  c \left[ \frac{k(1+\xi^{p+1})|x-y|}{d(y)^{\tau+1}}\right]^{\alpha/2} \\[2ex]
\qquad  \displaystyle  \leq   c \left[ \frac{k(1+\xi^{p+1})}{d(y)^{\tau+1}}\right]^{\alpha/2}  \left(\frac{2\tau}{d(y)}\right)^{1-\alpha/2} |x-y| = c\left[ k\frac{(1+\xi^{p+1})}{d(y)^{\tau+2/\alpha}}\right]^{\alpha/2} |x-y|\, .
   \end{array}
  $$ 
  Thus, in all cases we obtain
  \begin{equation}\label{diffgrad}
   \left| |\grad_x \phi|^{\alpha/2}-|\grad_y \phi|^{\alpha/2}\right| \leq  c  \left[ k\frac{(1+\xi^{p+1})}{d(y)^{\tau+\max\{1,2/\alpha\}}}\right]^{\alpha/2} |x-y|\leq c \frac{|\grad_x \phi|^{\alpha/2}}{d(y)^{\max \{ \alpha/2,1\}}} |x-y|\, .
\end{equation}
By using inequalities \eqref{ubgrad}, \eqref{lbgradx}, \eqref{coeff} and \eqref{diffgrad}, from estimate \eqref{ubmpiu1} it then follows
\begin{equation}\label{ubmpiu2}
\mathcal{M}^+ (Z_{\alpha,t}) \leq \frac{c k |\grad_x\phi|^\alpha}{d(y)^\tau} \left\{ t^2 \frac{1+\xi^p}{|x-y|} +t \frac{1+\xi^{p+2}}{d(y)^{\alpha/2+1}}+|x-y| \frac{1+\xi^{p+2}}{d(y)^{\alpha+2}}\right\}\, .
\end{equation}
Moreover,  since $p\geq \frac{2+\alpha-\beta}{\beta-\alpha-1}$ and $\tau\geq \frac{\alpha+2}{2(\beta-\alpha-1)}$, by using again \eqref{lbgradx},
we further deduce
$$
\mathcal{M}^+ (Z_{\alpha,t}) \leq c k |\grad_x\phi|^\alpha  \left\{  t^2\frac{1+\xi^p}{d(y)^\tau |x-y|} +t \frac{ |\grad_x\phi|^{\beta-\alpha}}{k^{\beta-\alpha}}+
\frac{|x-y| (1+\xi^{p+2})}{d(y)^{\tau+ \alpha+2}}\right\}\, .
$$
Using the above inequality jointly with \eqref{initial} yields
$$
\begin{array}{rl}
 t\, |\nabla _x\phi |^{\beta-\alpha} 
 \leq   &  \displaystyle c k   \left\{  t^2\frac{1+\xi^p}{d(y)^\tau |x-y|} +t \frac{ |\grad_x\phi|^{\beta-\alpha}}{k^{\beta-\alpha}}+
\frac{|x-y| (1+\xi^{p+2})}{d(y)^{\tau+\alpha+2}}\right\}\\[2ex]
 & + |\nabla _x\phi |^{-\alpha}\left(|\nabla _y\phi |^{\beta} - |\nabla _x\phi |^{\beta}\right) +t\, |\nabla _x\phi |^{-\alpha}\left( f-\lambda |u|^\alpha u\right)^+ \\[2ex]
 & +|\nabla _x\phi |^{-\alpha}\left(f(x)-f(y)\right)\, ,
\end{array}
$$
and therefore, being $\beta>\alpha+1$, for $k$ sufficiently large one has
$$
\begin{array}{rl}
 \frac{t}{2} \, |\nabla _x\phi |^{\beta-\alpha} 
 -     t^2\frac{c k (1+\xi^p)}{d(y)^\tau |x-y|} \leq & \displaystyle  \frac{c k |x-y| (1+\xi^{p+2})}{d(y)^{\tau+\alpha+2}} 
 + |\nabla _x\phi |^{-\alpha}\left(|\nabla _y\phi |^{\beta} - |\nabla _x\phi |^{\beta}\right)\\[2ex]
 &  +t\, |\nabla _x\phi |^{-\alpha}\left( f-\lambda |u|^\alpha u\right)^+ + |\nabla _x\phi |^{-\alpha}\left(f(x)-f(y)\right)\, .
\end{array}
$$
We now choose $t>0$ in order to maximize the left hand side, namely 
$$t=\frac{|\grad_x\phi|^{\beta-\alpha} d(y)^\tau |x-y|}{4 c k (1+\xi^p)}\, .$$
We then obtain
$$
\begin{array}{rl}
 |\nabla _x\phi |^{2(\beta-\alpha)} 
   \leq & \!\!\!\!\! \displaystyle c\left\{  \frac{  k ^2  (1+\xi^{2(p+1)})}{d(y)^{2\tau+\alpha+2}} 
 + k |\nabla _x\phi |^{-\alpha}\frac{(1+\xi^p)\left(|\nabla _y\phi |^{\beta} - |\nabla _x\phi |^{\beta}\right)}{|x-y|d(y)^\tau} \right.\\[2ex]
 &  \displaystyle \left.+  |\nabla _x\phi |^{\beta-2\alpha}\left( f-\lambda |u|^\alpha u\right)^+ + k |\nabla _x\phi |^{-\alpha}\frac{(1+\xi^p) \left(f(x)-f(y)\right)}{|x-y| d(y)^\tau}\right\}\, .
\end{array}
$$
Moreover, arguing as for \eqref{diffgrad} in the case $\alpha\geq 2$, we also have
$$
\left| |\nabla _y\phi |^{\beta} - |\nabla _x\phi |^{\beta} \right| \leq c k^\beta |x-y|\frac{(1+\xi^{p+1})^\beta}{d(y)^{(\tau+1)\beta}}\, ,
$$
so that
$$
\begin{array}{rl}
 |\nabla _x\phi |^{2(\beta-\alpha)} 
   \leq & \!\!\!\!\! \displaystyle C\left\{  \frac{k ^2  (1+\xi^{(p+1)})^2}{d(y)^{2\tau+\alpha+2}} 
 +  |\nabla _x\phi |^{-\alpha}\frac{k^{\beta+1}(1+\xi^{p+1})^{\beta+1}}{d(y)^{\tau(\beta+1)+\beta}}\right.\\[2ex]
 &  \displaystyle \left.+  |\nabla _x\phi |^{\beta-2\alpha}  +  |\nabla _x\phi |^{-\alpha}\frac{k (1+\xi^p)}{ d(y)^\tau}\right\}\, ,
\end{array}
$$
for some constant $C>0$ depending now also on $\| (f-\lambda|u|^\alpha u)^+\|_\infty$ and on the Lipschitz constant of $f$.

By inequality \eqref{lbgradx}  it then follows
$$
\begin{array}{rl}
|\nabla _x\phi |^{2(\beta-\alpha)} & \displaystyle \leq C \left\{ \frac{|\nabla _x\phi |^2}{d(y)^{\alpha+2}}+ \frac{|\nabla _x\phi |^{\beta-\alpha+1}}{d(y)^\beta} + |\nabla _x\phi |^{\beta-2\alpha} +|\nabla _x\phi |^{1-\alpha}\right\}\\[2ex]
&\displaystyle  \leq C \left\{ \frac{ |\nabla _x\phi |^{2+\frac{\alpha+2}{\tau}}}{k^{\frac{\alpha+2}{\tau}}} +\frac{ |\nabla _x\phi |^{ \beta-\alpha+1+\frac{\beta}{\tau}}}{k^{\frac{\beta}{\tau}}} +  |\nabla _x\phi |^{\beta-2\alpha} \right\} \, ,
\end{array}
$$
Recalling that $\alpha>0$, $\beta>\alpha +1$ and $\tau=\frac{\beta}{\beta-\alpha-1}$, the last inequality  gives  a contradiction for $k$ large enough.
\medskip

{\bf The case $\alpha< 0$.} As proved in \cite{BDcocv}, if $\alpha<0$ a sub solution $u$ and super solution $v$ of equation \eqref{eqq1}  satisfy respectively
in the viscosity sense
$$
\left\{
\begin{array}{c}
\displaystyle -F(D^2u) +|\grad u|^{\beta-\alpha} +\lambda |\grad u|^{-\alpha} |u|^\alpha u\leq |\grad u|^{-\alpha} f\\[2ex]
\displaystyle -F(D^2v) +|\grad v|^{\beta-\alpha} +\lambda |\grad v|^{-\alpha} |v|^\alpha v\geq |\grad v|^{-\alpha} f
\end{array}
\right.
$$
From \eqref{eqine} in this case  it then follows that
$$
\left\{
\begin{array}{c}
\displaystyle -F(X_\epsilon) +|\grad_x \phi|^{\beta-\alpha} +\lambda |\grad_x \phi|^{-\alpha} |u|^\alpha u(x)\leq |\grad_x \phi|^{-\alpha} f(x)\\[2ex]
\displaystyle -F(-Y_\epsilon) +|\grad_y \phi|^{\beta-\alpha} +\lambda |\grad_y\phi|^{-\alpha} |v|^\alpha v(y)\geq |\grad_y\phi|^{-\alpha} f(y)
\end{array}
\right.
$$
and, arguing as in the previous case, we now obtain for any $t>0$
$$
\begin{array}{ll}
 t\, |\nabla _x\phi |^{\beta-\alpha} \leq  &   \mathcal{M}^+\left( Z_{0,t} \right) + |\nabla _y\phi |^{\beta-\alpha} - |\nabla _x\phi |^{\beta-\alpha}+t\, |\grad_x \phi|^{-\alpha} \left( f-\lambda |u|^\alpha u\right)  \\[2ex] 
 & -  \left( f-\lambda |u|^\alpha u\right) \, \left( |\grad_y\phi|^{-\alpha}- |\grad_x\phi|^{-\alpha}\right) 
 + |\grad_y \phi|^{-\alpha} \left( f(x)-f(y)\right) \, ,\\[2ex]
 \end{array}
$$
where $Z_{0,t}$ is defined by \eqref{Zeta} (with $\alpha=0$).

By applying inequalities \eqref{ubmpiu2} (with $\alpha=0$), \eqref{lbgradx}, \eqref{ubgrad}, \eqref{diffgrad}, in the present case,  taking into account that $\beta-\alpha>1$ and $0<-\alpha<1$, we deduce that
  $$
 |\nabla _x\phi |^{2(\beta-\alpha)} \leq C\, \left\{ \frac{|\grad_x\phi|^2}{d(y)^2} + \frac{|\grad_x\phi|^{\beta-\alpha+1}}{d(y)^{\beta-\alpha}} 
+|\grad_x\phi|^{\beta-2\alpha} \right\}\, ,
$$
for some constant $C>0$ depending on $p, \alpha, \beta, a, A, N, \|f-\lambda |u|^\alpha u\|_\infty$  and on the Lipschitz constant of $f$. Since now $\tau=\frac{\beta-\alpha}{\beta-\alpha-1}$,  we reach a contradiction for $k$ sufficiently large as before.
 \end{proof}

As  in \cite{CDLP}, the  above Proposition and a scaling argument for solutions of equation \eqref{eqq1}  give  the following result.

  \begin{theo}\label{lipsol}  Let $F$  satisfy \eqref{unifel} and, for $\lambda\geq 0, \alpha>-1$ and $\beta>\alpha+1$, let $u$  be a continuous solution of equation \eqref{eqq1} in $\Omega\subset \R^N$, with $f$ Lipschitz continuous in $\Omega$. Then, $u$ is locally Lipschitz continuous in $\Omega$ and  there exists a positive constant $M$, depending only on $\alpha, \beta, a, A, N, \|f-\lambda |u|^\alpha u\|_\infty$ and on the Lipschitz constant of $f$, such that at any differentiability point $x\in \Omega$ one has
  $$
 |\grad u(x)|  \leq  \frac{M}{{\rm dist}_{\partial \Omega}(x)^{\frac{1}{\beta-\alpha-1}}}\, .
  $$
 \end{theo}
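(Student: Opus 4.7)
The strategy is to reduce the boundary-distance-dependent gradient bound to the uniform interior estimate of Proposition \ref{lipunif} by a self-similar rescaling. For any $x_0 \in \Omega$ set $r := \mathrm{dist}(x_0, \partial \Omega)$, and define
$$v(z) = r^{-\sigma}\, u(x_0 + r z)\,, \qquad z \in B\,, \qquad \sigma = \frac{\beta-\alpha-2}{\beta-\alpha-1},$$
so that $\sigma - 1 = -1/(\beta - \alpha - 1)$, which is exactly the exponent that must appear in the final bound.

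Substituting $u(x_0+rz) = r^\sigma v(z)$ into \eqref{eqq1} and multiplying the whole equation by $r^{\beta/(\beta-\alpha-1)}$, a direct computation — relying on the algebraic identities $(\sigma-1)\alpha + \sigma - 2 = (\sigma-1)\beta = -\beta/(\beta-\alpha-1)$ and $\sigma(\alpha+1) + \beta/(\beta-\alpha-1) = \alpha + 2$ — shows that $v$ is a viscosity solution in $B$ of
$$-|\nabla v|^\alpha F(D^2 v) + |\nabla v|^\beta + \tilde\lambda\, |v|^\alpha v = \tilde f,$$
with $\tilde\lambda = \lambda r^{\alpha+2}$ and $\tilde f(z) = r^{\beta/(\beta-\alpha-1)} f(x_0+rz)$. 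In particular,
$$\tilde f(z) - \tilde\lambda \,|v|^\alpha v(z) = r^{\beta/(\beta-\alpha-1)}\bigl(f - \lambda|u|^\alpha u\bigr)(x_0 + rz),$$
so both $\|\tilde f - \tilde\lambda|v|^\alpha v\|_\infty$ and $\Lip(\tilde f)$ are bounded uniformly for $r \le \mathrm{diam}(\Omega)$ in terms of $\|f-\lambda|u|^\alpha u\|_\infty$ and $\Lip(f)$.

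Next I would apply Proposition \ref{lipunif} to $v$ serving simultaneously as its own sub- and super-solution. For any fixed $y \in B_{1/2}$ and $x = y + te$ with $|e|=1$ and $t \in (0,1/2)$ small, the term $\sup_B(v-v)^+$ vanishes, so the proposition gives $|v(x) - v(y)| \leq C\, t\,(1 + (2t)^p)$ with $C$ independent of $r$. This proves that $v$ is Lipschitz on $B_{1/2}$ with a uniform constant; scaling back shows that $u$ is locally Lipschitz in $B_{r/2}(x_0)$ and, by the arbitrariness of $x_0$, locally Lipschitz in $\Omega$. At a differentiability point $x_0$ of $u$, dividing by $t$ and letting $t \to 0$ refines this to the sharp pointwise bound
$$|\nabla u(x_0)| \,=\, r^{\sigma-1}\,|\nabla v(0)| \,\le\, M\, r^{-1/(\beta-\alpha-1)},$$
as claimed.

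The main technical point is the algebraic bookkeeping that certifies the rescaled equation is still of the form \eqref{eqq1} with a zero-order datum whose $L^\infty$ norm and Lipschitz seminorm are uniformly controlled as $r$ varies; once this scaling is verified, the Lipschitz bound at the centre of the unit ball given by Proposition \ref{lipunif} automatically produces the correct power $1/(\beta-\alpha-1)$ of the boundary distance.
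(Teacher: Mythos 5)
Your proposal is correct and is precisely the scaling argument the paper invokes (it cites \cite{CDLP} and leaves the computation to the reader): the exponent $\sigma=\frac{\beta-\alpha-2}{\beta-\alpha-1}$, the identities yielding $\tilde\lambda=\lambda r^{\alpha+2}$ and $\tilde f-\tilde\lambda|v|^\alpha v=r^{\beta/(\beta-\alpha-1)}(f-\lambda|u|^\alpha u)$, and the application of Proposition \ref{lipunif} with $u=v$ all check out. The only cosmetic fixes are to cap $r$ at $1$ (so the rescaled data are controlled by $\|f-\lambda|u|^\alpha u\|_\infty$ and $\mathrm{Lip}(f)$ alone, without any dependence on $\mathrm{diam}\,\Omega$) and to rescale over a radius $\rho<\mathrm{dist}(x_0,\partial\Omega)$ (letting $\rho$ tend to the distance afterwards), since $u$ is only assumed continuous in $\Omega$ and Proposition \ref{lipunif} asks for bounded sub/super solutions on the ball.
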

\section{Proof of Theorem \ref{sympaetpas}.}\label{theo1}

By using the Lipschitz estimates obtained in the previous section, we can now prove Theorem \ref{sympaetpas}.

\begin{proof}[Proof of Theorem \ref{sympaetpas}] 
Let $u_\lambda$ be a solution of \eqref{eq45}. We begin by giving a bound that will be useful in the whole proof.
Observe that $u_\lambda^+$ is a sub solution of
$$-| \nabla u_\lambda^+ |^\alpha F( D^2 u_\lambda^+) \leq |f|_\infty;$$
from known estimates, see \cite{BD1}, this  implies that 
\begin{equation}\label{upper}|u_\lambda ^+|_\infty \leq c_1 |f|_\infty ^{\frac{1}{1+ \alpha}}.\end{equation}
Let us consider first the case  when there exists a sub solution  $\varphi$  of \eqref{eq3}. 
Then,  $\varphi-| \varphi |_\infty$ is  a sub solution of  equation \eqref{eq45}, and by the 
comparison principle   we deduce
 $u_\lambda \geq \varphi-| \varphi|_\infty$. Thus, 
in this case $(u_\lambda)$ is uniformly bounded in $\Omega$. The Lipschitz estimates in Theorem \ref{existence}  then yield that $u_\lambda$ is uniformly converging up to a sequence to a Lipschitz solution of  problem (\ref{eq3}). 

\medskip
We now treat the second case, i.e. we suppose that  \eqref{eq3} has no solutions.
In particular $|u_\lambda|_\infty $ diverges, since otherwise we could extract from $(u_\lambda)$ a subsequence 
 converging to a solution of \eqref{eq3}.  

On the other hand,  since  $-\left(\frac{ |f|_\infty }{\lambda}\right) ^{\frac{1}{1+ \alpha}}$ is a sub solution of \eqref{eq45}, by the comparison principle we obtain 
$u_\lambda^- \leq \left(\frac{|f|_\infty}{\lambda}\right) ^{\frac{1}{1+ \alpha}}$, which,    
jointly with  \eqref{upper}, yields $\lambda |u_\lambda|_\infty^{1+ \alpha} \leq  c_1|f|_\infty.$ 
Hence,   there exists  $(x_\lambda)\subset \Omega$ such that $u_\lambda (x_\lambda)=-|u_\lambda|_\infty\rightarrow -\infty$ and there exists a constant $c_\Omega\geq 0$ such that, up to a subsequence, $\lambda |u_\lambda|_\infty^{1+ \alpha}\rightarrow c_\Omega$.
   
We will show, as in \cite{LL} (see also \cite{LP} and \cite{P}), that  $v_\lambda = u_\lambda + | u_\lambda|_\infty=u_\lambda -u_\lambda(x_\lambda)$ converges up to a subsequence to  a   function $v$ such that the pair $(c_\Omega,v)$ solves \eqref{ergodic}.

Clearly,  $v_\lambda$ satisfies  in $\Omega$
$$
-|\nabla v_\lambda |^\alpha F( D^2 v_\lambda) + | \nabla v_\lambda|^\beta + \lambda (v_\lambda)^{1+ \alpha } = f+ \lambda ( v_\lambda^{\alpha+1}-|u_\lambda|^\alpha u_\lambda) \geq f \, .
$$
Next, we set
$$\gamma=\frac{2+\alpha-\beta}{\beta-1-\alpha}\, ,$$
 and for  $s, \delta_o>0$ to be chosen sufficiently small, let us consider the function
\begin{equation}\label{phi}
\begin{array}{l}
\displaystyle \phi(x)=\frac{\sigma}{(d(x)+s)^\gamma}- \frac{\sigma}{(\delta_o+s)^\gamma} \quad \hbox{ if } \gamma>0\, ,\\[2ex]
\phi(x)=-\sigma\, \log (d(x)+s) + \sigma\, \log (\delta_o+s) \quad \hbox{ if } \gamma=0\, ,
\end{array}
\end{equation}
where  $\sigma=  \left((\gamma+1)\frac{a}{2} \right)^{\frac{1}{\beta-\alpha-1}}\gamma^{-1}$ if $\gamma>0$,  
$\sigma= \frac{a}{2}$ if $\gamma=0$.
A direct computation shows that, for $d(x)\leq \delta_0$ with $\delta_o$ small enough, in the case $\gamma>0$ one has
$$ -|\nabla \phi|^\alpha \mathcal{M}^-( D^2 \phi) + | \nabla \phi|^\beta + \lambda \phi^{1+ \alpha } \leq 
- \frac{a(\sigma \gamma)^{\alpha+1}}{2(d+s)^{(\gamma+1)\beta}}+\frac{A (\sigma \gamma)^{\alpha+1}|D^2d|_\infty}{(d+s)^{(\gamma+1)(\alpha+1)}}
+ \frac{\lambda \sigma^{\alpha+1}}{(d+s)^{\gamma(\alpha+1)}},$$
 and,  in the case $\gamma=0$,
$$ -|\nabla \phi|^\alpha \mathcal{M}^-( D^2 \phi) + | \nabla \phi|^\beta + \lambda \phi^{1+ \alpha } \leq 
- \frac{\sigma ^{\alpha+2}}{(d+s)^{\alpha+2}}+\frac{A \sigma^{\alpha+1}|D^2d|_\infty}{(d+s)^{(\alpha+1)}}
+ \lambda \left( -\sigma \log (d+s)\right)^{\alpha+1}.$$
In both cases,  by the ellipticity of $F$ and for $\delta_o$ and $s$ sufficiently small, we obtain
$$ -|\nabla \phi|^\alpha F( D^2 \phi) + | \nabla \phi|^\beta + \lambda \phi^{1+ \alpha } \leq -|f|_\infty\leq f(x) \quad \hbox{ in } \Omega\setminus \overline{\Omega}_{\delta_0}\, .$$
Moreover, one has $\phi=0\leq v_\lambda$ on $\partial \Omega_{\delta_o}$ and $\phi\leq |u_\lambda|_\infty=v_\lambda$ on $\partial \Omega$ for $\lambda$ sufficiently small in dependence of $s$. The comparison principle then yields
\begin{equation}\label{lowbo}
v_\lambda \geq \phi >0 \quad \hbox{ in } \Omega\setminus \overline{\Omega}_{\delta_0}\, .
\end{equation}
Since $v_\lambda(x_\lambda)=0$, from \eqref{lowbo} we deduce that $(x_\lambda)\subset \overline{\Omega}_{\delta_o}$. The interior Lipschitz estimate of Theorem \ref{lipsol} then yields that $v_\lambda =u_\lambda -u_\lambda(x_\lambda)$ is  locally uniformly bounded and locally uniformly Lipschitz continuous.
This proves both statement \ref{a} of the theorem  and   that $(v_\lambda)$ is locally uniformly converging up to a subsequence to a Lipschitz continuous function $v_o\geq 0$ in $\Omega$. Moreover, since also $(x_\lambda)$ converges up to a subsequence to some point  $x_o\in \overline{\Omega}_{\delta_0}$, we obtain  $v_o(x_o)=0$. We observe further that, locally uniformly in $\Omega$, one has
$$
\lim_{\lambda \to 0} \lambda |u_\lambda|^\alpha u_\lambda  =\lim_{\lambda \to 0} \lambda \, |u_\lambda|_\infty^{\alpha+1}\,  
\frac{ |v_\lambda -|u_\lambda|_\infty|^{\alpha}(v_\lambda-|u_\lambda|_\infty)}{|u_\lambda|_\infty^{\alpha+1}}= -c_\Omega\, .
$$
This yields statement \ref{b}  and, letting $\lambda\to 0$ in the equation   satisfied by $v_\lambda$, also that $v_o$ is a viscosity solution of
$$
-|\nabla v_o |^\alpha F( D^2 v_o) + | \nabla v_o|^\beta = f+c_\Omega\, .
$$
Finally, letting $\lambda\to 0$ in inequality \eqref{lowbo}, we obtain $v_o \geq \phi$   in  $\Omega\setminus \overline{\Omega}_{\delta_0}$, which 
 in turn implies, by letting $s\to 0$ and $x\to \partial \Omega$, that $v_o(x)\to +\infty$ as $d(x)\to 0$. This completely proves statement \ref{c} and concludes the proof of the theorem.
    
     \end{proof}

\section{Existence of explosive solutions.}\label{exiergolambda}
  In this section we prove the existence of solutions of \eqref{eqq1} blowing up at the boundary. 
The results obtained here will be used in the proof of the existence of  ergodic pairs.

Throughout  the present section, on the one hand  we assume the regularity condition \eqref{smooth}, but,
 on the other  hand, we drop the assumption on the boundedness of the right hand side $f$, and we  consider continuous functions in $\Omega$, possibly unbounded as $d(x)\to 0$. 
 
For what follows, we set
$$\gamma = \frac{ 2+ \alpha-\beta }{ \beta-1-\alpha }\, ,$$
and  we denote by $C(x)$ a non negative function of class $\mathcal{C}^2$ in $\Omega$ satisfying  in a neighborhood of $\partial \Omega$
\begin{equation}\label{C(x)}
\begin{array}{lcl}C(x)=\left((\gamma+1)F(\grad d(x)\otimes\grad d(x))\right)^{\frac{1}{ \beta-\alpha-1}}\gamma^{-1}\  &\mbox{if }& \gamma>0,\\[1ex]
 C(x)=F(\grad d(x)\otimes\grad d(x))\quad & \mbox{if }& \gamma=0.
 \end{array}
\end{equation}
\begin{theo}\label{exilambdainf} Let  $\beta  \in (\alpha+1,\alpha+2]$,  $\lambda >0$ and  let $F$ satisfy \eqref{unifel} and \eqref{smooth}. Let further $f\in \mathcal{C}(\Omega)$  be  bounded from below  and such  that  
\begin{equation}\label{f}
\lim_{d(x)\to 0} f(x) d(x)^{\frac{\beta}{\beta-1-\alpha }-\gamma_0} = 0\, ,
\end{equation} 
for some $ \gamma_0\geq 0$.
 Then, there exists 
 a  solution  $u$   of
                     \begin{equation}\label{eqlambdainfini}\left\{ \begin{array}{cl}
                     -| \nabla u |^\alpha F(D^2 u) + |\nabla u |^\beta   + \lambda |u|^\alpha u  = f &  \hbox{ in } \ \Omega \, ,\\
                     u=+\infty  & \hbox{ on } \ \partial \Omega\, .
                     \end{array}\right.\end{equation}
Moreover, any solution $u$ of \eqref{eqlambdainfini} satisfies:  for any $\nu >0$ and for any $0\leq \gamma_1\leq \gamma_0$, with $\gamma_1< 1$, and $\gamma_1<\gamma$ when $\gamma>0$, there exists $D=\frac{D_1}{\lambda^{1/(\alpha+1)}}$, with $D_1>0$ depending on $\nu, \gamma_1, \alpha, \beta, a, A, |d|_{\mathcal{C}^2(\Omega)}, |C|_{\mathcal{C}^2(\Omega)}$ and on $f$, such that, for all $x\in \Omega$,
\begin{equation}\label{bouest}
\begin{array}{c}
\displaystyle \frac{C(x)}{d(x)^\gamma}-\frac{\nu}{d(x)^{\gamma-\gamma_1}}-D\leq u(x)\leq \frac{C(x)}{d(x)^\gamma}+\frac{\nu}{d(x)^{\gamma-\gamma_1}}+D
\ \hbox{ if } \gamma>0\, ,\\[2ex]
|\log d(x)|\left( C(x) - \nu d(x)^{\gamma_1}\right)-D\leq u(x)\leq  |\log d(x)|\left( C(x) +\nu d(x)^{\gamma_1}\right)+D
\ \hbox{ if } \gamma=0\, .
\end{array}
\end{equation}
 Furthermore, the solution $u$ is unique
\begin{itemize}
\item for $\alpha \geq 0$ and any $\beta $,  
\item for  $\alpha <0$  and any  $\beta >{\frac{1-\alpha-\alpha^2}{1-\alpha}}$, provided that
 $f$ satisfies \eqref{f} with $\gamma_0>-\alpha\, \gamma$.
\end{itemize}
 \end{theo}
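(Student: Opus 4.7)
The strategy is to build explicit supersolution and subsolution barriers with the correct boundary blow-up, use them to solve approximating Dirichlet problems with finite boundary data $u=n$ and pass to the limit, and then apply them a second time to extract the two-sided estimate \eqref{bouest} for any solution. Uniqueness is handled by a rescaling-comparison argument.

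\textbf{Step 1 (barriers).} Substituting the ansatz $u \sim C(x)/d(x)^\gamma$ (or $C(x)|\log d(x)|$ when $\gamma=0$) into the principal terms $-|\nabla u|^\alpha F(D^2 u) + |\nabla u|^\beta$ and matching the most singular powers of $d(x)$ forces $\gamma = (2+\alpha-\beta)/(\beta-\alpha-1)$ and identifies $C(x)$ as in \eqref{C(x)}. Guided by this, I build classical barriers of the form
\[
w^\pm(x) = \frac{C(x)}{d(x)^\gamma} \pm \frac{\nu}{d(x)^{\gamma-\gamma_1}} \pm D
\]
in a boundary strip $\Omega \setminus \overline{\Omega}_{\delta_0}$ (with the obvious logarithmic analogue when $\gamma=0$). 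A direct computation, using \eqref{smooth} to differentiate $C(x)$ safely and the growth \eqref{f} to control $f$, shows that the next-order residual in $d(x)$ has a definite sign, so $w^+$ is a supersolution and $w^-$ a subsolution of \eqref{eqlambdainfini} once $D = D_1 \lambda^{-1/(\alpha+1)}$ is chosen large. The purpose of the $\nu d^{\gamma_1-\gamma}$ correction is precisely to generate this definite-sign residual; the zero-order term $\lambda |u|^\alpha u$ produces a weaker singularity than $|\nabla u|^\beta$ near $\partial \Omega$ and is absorbed through $D$.

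\textbf{Steps 2--3 (existence and a posteriori bounds).} For each $n \in \mathbb{N}$, Theorem \ref{existence} together with the Perron procedure of \cite{BDL1} yields a solution $u_n$ of the Dirichlet problem with $u_n = n$ on $\partial\Omega$; the strict monotonicity of $\lambda |u|^\alpha u$ and Theorem \ref{comp} allow me to compare $u_n$ with shifted barriers (replace $d$ by $d+s_n$ with $s_n \to 0$ chosen so the shifted $w^+$ dominates $n$ on $\partial\Omega$). This produces locally uniform upper bounds; lower bounds come from $w^-$ together with a constant subsolution provided by the lower bound on $f$. The interior Lipschitz estimate of Theorem \ref{lipsol} then gives local equicontinuity, and standard viscosity stability extracts a subsequential limit $u$ solving \eqref{eqlambdainfini}. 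The a~posteriori estimate \eqref{bouest} for an arbitrary solution $u$ follows from a second application of Theorem \ref{comp}: by Theorem \ref{lipsol} the solution $u$ is locally bounded in $\Omega$, so for $D$ large one has $w^- \leq u \leq w^+$ on $\partial \Omega_{\delta_0}$, while on $\partial \Omega$ the inequality is automatic since $u$ blows up and $w^\pm$ is controlled from above by $C(x)/d^\gamma$ plus lower-order corrections.

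\textbf{Step 4 (uniqueness) and main obstacle.} For $\alpha \geq 0$ and two solutions $u_1, u_2$, I compare $u_1$ with $(1+\epsilon) u_2 + K_\epsilon$: by the $1$-homogeneity of $F$ and the strict monotonicity of $\lambda |u|^\alpha u$, this is a supersolution of a perturbation of \eqref{eqlambdainfini}, and \eqref{bouest} provides the ordering near $\partial \Omega$; Theorem \ref{comp} then propagates the ordering to all of $\Omega$, and $\epsilon, K_\epsilon \to 0$ gives $u_1 \leq u_2$. In the singular case $\alpha < 0$ this simple rescaling no longer has the right sign in the $|\nabla u|^\beta$ vs $\lambda |u|^\alpha u$ balance; the condition $\beta > (1-\alpha-\alpha^2)/(1-\alpha)$ is exactly what restores the favourable sign after perturbation, while $\gamma_0 > -\alpha\, \gamma$ in \eqref{f} ensures that the right-hand side $f$ does not overwhelm the correction near $\partial \Omega$. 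The main technical obstacle is the barrier computation of Step 1: the $x$-dependence of $C(x)$ through $F(\nabla d \otimes \nabla d)$ and the need to keep track of every subleading power of $d(x)$ make the algebra heavy even though the leading-order cancellation is clean, and extracting the sharp exponent $(1-\alpha-\alpha^2)/(1-\alpha)$ in the singular uniqueness step demands a delicate matching of the rescaling perturbation against the residuals produced in Step 1.
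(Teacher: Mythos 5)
Your strategy coincides with the paper's own: the barriers $C(x)d^{-\gamma}\pm \nu d^{\gamma_1-\gamma}\pm D$ (glued to a bounded function in the interior), existence by monotone approximation with finite boundary data plus the comparison principle and the interior Lipschitz estimates, and uniqueness by comparing a dilated-and-shifted copy of one solution with the other. Two steps, however, do not work as you state them.

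First, in Step 3 the upper half of \eqref{bouest} for an \emph{arbitrary} solution $u$ cannot be obtained by comparing $u$ with $w^+$ directly in a boundary strip: on $\partial\Omega$ both functions equal $+\infty$, so the inequality there is not ``automatic'', and if instead you enforce $u\le w^+$ on $\partial\Omega_{\delta_0}$ by enlarging $D$, then $D$ depends on $\sup_{\partial\Omega_{\delta_0}}u$, i.e.\ on $u$ itself, which contradicts the claimed dependence of $D_1$ only on the data. The paper resolves this by a sandwich between extremal solutions: the minimal explosive solution $\underline u=\lim_{R\to\infty}u_R$ (where $u_R=R$ on $\partial\Omega$ has genuinely finite boundary values, so $u_R\le \overline w$ is a legitimate comparison) and the maximal one $\overline u=\lim_{\delta\to 0}\underline u_{\Omega_\delta}$; every solution satisfies $\underline u\le u\le\overline u$, and only $\underline u,\overline u$ are compared with the barriers. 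Some such device is needed to get a constant independent of $u$.

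Second, in the singular uniqueness step the restriction on $\beta$ does not come from a sign in the $|\nabla u|^\beta$ term. One compares $\theta\overline u-c$ with $\underline u$ in $\Omega_\delta$; the strict subsolution property is produced by the zero order term via $|t-c|^\alpha(t-c)-|t|^\alpha t\le -2^{\alpha}(\alpha+1)K^\alpha c$ for $|t|\le K$, and since $K\sim\delta^{-\gamma}$ the admissible shift is $c_{\theta,\delta}\sim(1-\theta^{\alpha+1})\,\delta^{\alpha\gamma}$, which degenerates as $\delta\to0$ because $\alpha<0$. The boundary ordering on $\partial\Omega_\delta$, read off from \eqref{bouest}, forces $\delta=\delta_\theta\sim(1-\theta)^{1/\gamma_1}$, whence $c_{\theta,\delta_\theta}\sim(1-\theta)^{1+\alpha\gamma/\gamma_1}$; this tends to $0$ precisely when $\gamma_1>-\alpha\gamma$. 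The hypotheses $\beta>\frac{1-\alpha-\alpha^2}{1-\alpha}$ (equivalent to $-\alpha\gamma<1$) and $\gamma_0>-\alpha\gamma$ are exactly what make such a $\gamma_1$ admissible in \eqref{bouest}. Your sketch gestures at a ``delicate matching'' but misattributes the mechanism, and this balance is the actual content of the restriction.
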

 \begin{proof}  We  give the proof in the case $\gamma>0$, the reader can easily see the changes to be made 
when $\gamma=0$. 
\smallskip

{\bf 1. Estimates and  existence.}

 We will get the conclusion by showing that, for every  $\nu>0$ and for any $0\leq \gamma_1\leq \gamma_0$, with $\gamma_1< \min\{1, \gamma\}$, there exist  $D= \frac{D_1}{\lambda^{1/\alpha+1}}>0$, a super solution $\overline{w}$ and a sub solution $\underline{w}^s$ satisfying
$$
C(x) (d+ s) ^{-\gamma} -\nu (d+ s) ^{-\gamma+ \gamma_1}- D \leq \underline{w}^s(x) \leq  \overline{ w}(x)  
 \leq  C(x) d^{-\gamma} +\nu d^{-\gamma+ \gamma_1}+ D, 
 $$
  for  any $s>0$ sufficiently small.
   
  Assume  for a while that  this is proved. Then,  problem \eqref{eqlambdainfini} does have  solutions and any solution $u$ of \eqref{eqlambdainfini}  satisfies 
    $ \underline{w}^o \leq u \leq  \overline{w}$. 
   Indeed, for any $R>0$, we can consider  the solution $u_R$ of 
$$\left\{ \begin{array}{lc}
  -| \nabla u_R |^\alpha F( D^2 u_R) + | \nabla u_R |^\beta + \lambda |u_R|^\alpha u_R = f_R & {\rm in} \ \Omega\\
   u_R=R & {\rm on} \ \partial \Omega\, ,
   \end{array}
   \right.$$
   with $f_R=\min \{ f, R\}$.
 By Theorem \ref{comp}, $u_R$ is  monotone increasing with respect to $R$ and satisfies $\underline{w}^s\leq u_R \leq \bar w$, provided that $R> \max_{\partial \Omega} \underline{w}^s(x)$. Moreover, $u_R$ is locally uniformly Lipschitz continuous by Theorem \ref{existence}.
Thus, $u_R$ is  locally uniformly convergent as $R\to +\infty$ to a solution $\underline{u}$ of \eqref{eqlambdainfini}
 such that  $\underline{w}^0\leq \underline{ u}\leq \bar w$. 
  By definition, $\underline{u}$ is the so called minimal explosive solution.    The maximal explosive solution $\overline u$ is then obtained  as the limit for $\delta\rightarrow 0$ of the minimal explosive solutions in $\Omega_\delta$. 
Thus, it follows that for any solution $u$ of problem \eqref{eqlambdainfini} one has
 $$
     C(x) d^{-\gamma}-\nu d^{ -\gamma+ \gamma_1} -D \leq \underline{u}\leq u\leq \overline{u} \leq C(x) d^{-\gamma}+\nu d^{ -\gamma+ \gamma_1} +D\, .
$$
Let us now  proceed to the construction of $\overline{w}$ and $\underline{w}^s$. 

 Let $\delta>0$ be so small  that  in  the set $\Omega\setminus\Omega_{2\delta}=\{d (x)< 2\delta\}$  the function $d$ satisfies   $|\nabla d|=1$ and $C$ satisfies \eqref{C(x)}.  For $x\in  \Omega\setminus\Omega_{2\delta}$, let us consider the function 
 $$\varphi (x)= C(x)  d(x)^{-\gamma} +  \nu d(x)^{\gamma_1-\gamma}\, .$$
One has
 $$\nabla \varphi (x) = -\gamma\, C(x)  d^{-\gamma-1} \left(1+ \nu \frac{( \gamma-\gamma_1)}{ \gamma\, C(x)} d^{ \gamma_1} \right) \nabla d+ d^{-\gamma} \nabla C$$
 and
 $$  \begin{array}{rl}
   D^2 \varphi (x)= &  \gamma\,  ( \gamma+1) C(x) d^{-\gamma-2} \left( 1+ \nu \frac{( \gamma-\gamma_1) ( \gamma-\gamma_1+1)}{ \gamma \,( \gamma+1) C(x)} d^{ \gamma_1}\right ) \nabla d \otimes \nabla d\\[2ex]
 &  -\gamma\,  C(x)  d^{-\gamma-1} \left( 1+ \nu \frac{(\gamma-\gamma_1)}{ \gamma\, C(x)} d^{ \gamma_1} \right) D^2 d \\[2ex]
&  -\gamma \, d^{-\gamma-1} \left( \nabla d \otimes \nabla C+ \nabla C \otimes \nabla d \right) + d^{-\gamma}D^2C\, .
   \end{array}
   $$
By ellipticity of $F$ and by definition of $C(x)$ it then follows
$$
\begin{array}{rl}
F(D^2\varphi) \leq &  \frac{\gamma\,  ( \gamma+1) C(x)}{ d^{\gamma+2}} \left( 1+ \nu \frac{( \gamma-\gamma_1) ( \gamma-\gamma_1+1)}{ \gamma \,( \gamma+1) C(x)} d^{ \gamma_1}\right ) F\left( \nabla d \otimes \nabla d\right) +\frac{K_1}{ d^{\gamma+1}}\\[2ex]
= &  \frac{(\gamma\,   C(x))^{\beta-\alpha}}{ d^{\gamma+2}} \left( 1+ \nu \frac{( \gamma-\gamma_1) ( \gamma-\gamma_1+1)}{ \gamma \,( \gamma+1) C(x)} d^{ \gamma_1}\right ) +\frac{K_1}{ d^{\gamma+1}}\, ,
\end{array}
$$
for a constant $K_1>0$ depending on $\nu, \alpha, \beta, \gamma_1, a, A, |D^2d|_\infty$  
 and 
 $|C|_{\mathcal{C}^2(\Omega)}$. In what follows we denote by $K_i$, $i=1,2\ldots$, different constants depending on these quantities.
 
 Hence, we obtain
$$ \begin{array}{l}
- | \nabla \varphi |^{ \alpha} F ( D^2 \varphi)+ | \nabla \varphi |^\beta\\[2ex]
 \geq  |\nabla \varphi|^{\alpha} \left[ - \frac{(\gamma\,   C(x))^{\beta-\alpha}}{ d^{\gamma+2}} \left( 1+ \nu \frac{( \gamma-\gamma_1) ( \gamma-\gamma_1+1)}{ \gamma \,( \gamma+1) C(x)} d^{ \gamma_1}\right ) -\frac{K_1}{ d^{\gamma+1}} +|\nabla \varphi|^{\beta-\alpha}\right]\\[2ex]
\geq  |\nabla \varphi|^{\alpha} \left[ - \frac{(\gamma\,   C(x))^{\beta-\alpha}}{ d^{\gamma+2}} \left( 1+ \nu \frac{( \gamma-\gamma_1) ( \gamma-\gamma_1+1)}{ \gamma \,( \gamma+1) C(x)} d^{ \gamma_1}\right ) -\frac{K_1}{ d^{\gamma+1}} \right.\\[2ex]
\qquad \qquad \left. + \frac{(\gamma\, C(x))^{\beta-\alpha}}{d^{(\gamma+1)(\beta-\alpha)}} \left( 1+ \nu  \frac{( \gamma-\gamma_1)}{ \gamma\, C(x)} d^{ \gamma_1} \right)^{\beta-\alpha} -\frac{K_2}{d^{(\gamma+1) (\beta-\alpha)-1}}\right]\\[2ex]
\geq   |\nabla \varphi|^{\alpha} \left[ 
- \frac{(\gamma\,   C(x))^{\beta-\alpha}}{ d^{\gamma+2}} \left( 1+ \nu \frac{( \gamma-\gamma_1) ( \gamma-\gamma_1+1)}{ \gamma \,( \gamma+1) C(x)} d^{ \gamma_1}\right )
-\frac{K_1}{ d^{\gamma+1}}\right.\\[2ex]
\qquad \qquad \left.   + \frac{(\gamma\, C(x))^{\beta-\alpha}}{d^{(\gamma+1)(\beta-\alpha)}} \left( 1+ \nu  \frac{(\beta-\alpha)( \gamma-\gamma_1)}{ \gamma\, C(x)} d^{ \gamma_1} \right) -\frac{K_2}{d^{(\gamma+1) (\beta-\alpha)-1}}\right]
\end{array}
 $$
Recalling that $\gamma=\frac{\alpha+2-\beta}{\beta-\alpha-1}$, we finally deduce
$$
\begin{array}{l}
- | \nabla \varphi |^{ \alpha} F ( D^2 \varphi)+ | \nabla \varphi |^\beta\\[2ex]
 \geq  |\nabla \varphi|^{\alpha} \left[ 
 \nu  \frac{( \gamma-\gamma_1) (1+\gamma_1)}{ ( \gamma+1)}\frac{(\gamma\,   C(x))^{\beta-\alpha-1}}{ d^{\gamma+2-\gamma_1}} 
-\frac{K_3}{ d^{\gamma+1}}\right]\\[2ex]
=\frac{|(\gamma\, C(x)+\nu (\gamma-\gamma_1)d^{\gamma_1})\nabla d-d\, \nabla C|^\alpha}{d^{\frac{\beta}{\beta-\alpha-1}-\gamma_1}} \left[  \nu  \frac{( \gamma-\gamma_1) (1+\gamma_1)}{ ( \gamma+1)}(\gamma\,   C(x))^{\beta-\alpha-1}  
-K_3 d^{1-\gamma_1}\right]
\end{array}
$$
Since $\gamma_1\leq \gamma_0$, by assumption \eqref{f} on $f$   the last inequality implies that, for $\delta$ sufficiently small,
$$
- | \nabla \varphi |^{ \alpha} F ( D^2 \varphi)+ | \nabla \varphi |^\beta\geq f(x)\quad \hbox{ in } \Omega\setminus \Omega_{2\delta}\, ,
$$
and therefore also that
$$
- | \nabla \varphi |^{ \alpha} F ( D^2 \varphi)+ | \nabla \varphi |^\beta +\lambda \varphi^{\alpha+1} \geq f(x)\quad \hbox{ in } \Omega\setminus \Omega_{2\delta}\, .
$$
Clearly, the same inequality holds also for $\varphi_1(x)=\varphi(x)+D$, for any $D>0$. 

Next, for $x\in \{ \delta\leq d(x)\leq 2\delta\}$, we consider the function
$$
\varphi_2(x)= \delta^{-\gamma}C(x) e^{\frac{1}{d(x)-2\delta}+\frac{1}{\delta}}+E\, ,
$$
with $E>0$ to be conveniently fixed. A direct computation shows that, for $\delta$ sufficiently small in dependence of $|\nabla C|_\infty$, one has
$$| \nabla \varphi_2  |^\alpha |F( D^2  \varphi_2)| + | \nabla \varphi_2 |^\beta \leq K_4\quad \hbox{ in } \overline \Omega_{\delta}\setminus \Omega_{2\delta}\, .$$
Thus, if $E$ is chosen satisfying $E\geq \left( \frac{|f|_{L^\infty (\Omega_\delta)}+K_4}{\lambda}\right)^{\frac{1}{\alpha+1}}$, then we obtain
$$
- | \nabla \varphi_2 |^{ \alpha} F ( D^2 \varphi_2)+ | \nabla \varphi_2 |^\beta +\lambda \varphi_2^{\alpha+1} \geq f(x)\quad \hbox{ in } \overline\Omega_\delta \setminus \Omega_{2\delta}\, .
$$
Let us further restrict the smallness of $\delta$ by requiring that
\begin{equation}\label{delta1} 
 A\gamma \delta^{-\gamma-1}+ (\gamma-\gamma_1) \delta^{ \gamma_1-\gamma-1} < a \delta^{-2-\gamma}\, .
  \end{equation} 
 Then, setting $D=\left( \frac{|f|_{L^\infty (\Omega_\delta)}+K_4}{\lambda}\right)^{\frac{1}{\alpha+1}}$ and $E=\nu \delta^{\gamma_1-\gamma}+D$, the function
$$ \overline w (x)=  \left\{ \begin{array}{ll}
            C(x) d(x)^{-\gamma}  + \nu d^{-\gamma+ \gamma_1} + D & {\rm for } \  d <  \delta\\[1ex]
         \delta^{-\gamma}  C(x) e^{\frac{1}{d(x)-2\delta} +\frac{1}{\delta}} + E    & { \rm for }\  \delta\leq  d\leq 2\delta\\[1ex]
             E &{\rm for } \ d> 2\delta
             \end{array}\right.$$
 is the required  super solution in $\Omega$. Indeed,  in the set $\Omega_\delta$,  
 $\overline w$ is of class ${ \cal C}^2$ and it is a super solution by the properties of $\varphi_2$ and by the fact that locally constant functions satisfy
 $|\nabla u |^\alpha F( D^2 u) = 0$. On the other hand,  $\overline w$ is a super solution in $\Omega\setminus \Omega_{2 \delta}$ by the properties of $\varphi_1$ and $\varphi_2$ and the fact that $\varphi_2 (x)<\varphi_1 (x)$ for $d(x)>\delta$, as it follows  by evaluating  at points $x$ such that $d(x)=\delta$ the derivatives of $\varphi_1$ and $\varphi_2$ along the direction $\nabla d(x)$  and by using  \eqref{delta1}.                                                 
\medskip
           
As far as the sub solution is concerned,  for $s>0$, $\nu >0$ and $x\in \Omega\setminus \Omega_{2\delta}$, let us consider the function
 $$\varphi ^s (x)= C(x) (d(x)+ s)^{-\gamma}- \nu (d(x)+s) ^{-\gamma+ \gamma_1}\, .$$
Analogous computations as above give that
$$
\begin{array}{l}
- |\nabla \varphi^s|^{ \alpha} F ( D^2 \varphi^s)+ |\nabla \varphi^s|^\beta +\lambda |\varphi^s|^\alpha \varphi^s\\[2ex]
\leq |\nabla \varphi^s|^\alpha 
\left[ -\frac{(\gamma\,   C(x))^{\beta-\alpha-1}}{ (d+s)^{\gamma+2-\gamma_1}} \nu \frac{(\gamma-\gamma_1)(1+\gamma_1)}{\gamma+1}
+\frac{K_5}{ (d+s)^{\gamma+1}}\right] + \lambda \frac{A^{\alpha+1}}{(d+s)^{\gamma\, (\alpha+1)}}\leq f(x)\, ,
\end{array}
$$
for $\delta$ and $s$ sufficiently small, since $f$ is bounded from below.

Moreover, setting $C_\delta= \max_{\{ d(x)= \delta\}} C(x)$ and choosing $D\geq C_\delta \delta^{-\gamma}+\left(\frac{|f^-|_\infty}{\lambda}\right)^{\frac{1}{\alpha+1}}$,  the constant function $C_\delta(\delta+s)^{-\gamma}-\nu (\delta+s)^{-\gamma+\gamma_1}-D$ is also a sub solution in $\Omega$.

Therefore, the function
             $$ \underline{w}^s(x) = \left\{ \begin{array} {ll}
    \begin{array}{c} \max \left\{ C(x) (d+s)^{-\gamma}- \nu (d+s)^{ -\gamma+\gamma_1}-D\right. ,\\[1ex]
    \left. \ \ \ C_\delta( \delta+s)^{-\gamma} - \nu (\delta+s)^{-\gamma+\gamma_1}-D\right\} 
    \end{array} & {\rm in } \   \Omega\setminus \Omega_\delta\\[2ex]
     C_\delta( \delta+s)^{-\gamma} - \nu (\delta+s)^{-\gamma+\gamma_1}-D  & { \rm in } \ \Omega_\delta
     \end{array} \right.
     $$
   is the wanted sub solution.
   \smallskip
    
 {\bf 2. Uniqueness.}
      
We prove that $\underline{u}=\overline{u}$.

Remark that, by estimates \eqref{bouest}, for any $\theta<1$ and for any $c\in \R$,  
there exists $\delta$ such that $\theta \overline{u}(x) - c \leq \underline{u}(x)$ for  $d (x)\leq \delta$.  

 \medskip
   
{\em The case $\alpha\geq 0$.} Observe  that, for all $t\in \R$ and $c>0$, one has
$$
|t-c|^\alpha(t-c)-|t|^\alpha t\leq - 2^{-\alpha}c^{\alpha+1}\, .
$$
From this, we  deduce that
$$
-|\grad (\theta \overline{u}-c)|^{\alpha} F\left( D^2(\theta \overline{u}-c)\right) +|\grad (\theta \overline{u}-c)|^\beta +\lambda |\theta \overline{u}-c|^{\alpha} (\theta \overline{u}-c) \leq \theta^{\alpha+1} f(x) - \lambda 2^{-\alpha}  c^{\alpha+1}\, ,
$$
and the choice
$$
c=c_\theta =\left( \frac{2^\alpha (1-\theta^{\alpha+1}) | f^-|_\infty}{\lambda}\right)^{\frac{1}{\alpha+1}}
$$
then yields
$$
\begin{array}{r}
-|\grad (\theta \overline{u}-c)|^{\alpha} F\left( D^2(\theta \overline{u}-c)\right) +|\grad (\theta \overline{u}-c)|^\beta +\lambda |\theta \overline{u}-c|^{\alpha} (\theta \overline{u}-c)\\[2ex]
\qquad \leq \theta^{\alpha+1} f(x)- (1-\theta^{\alpha+1})\| f^-\|_\infty \leq f(x)\, .
\end{array}
$$
By applying Theorem \ref{comp}, it then follows that $\theta \overline{u}-c_\theta \leq \underline{u}$ in  $\Omega$, and letting $\theta \to 1$ we obtain the uniqueness of the explosive solution in the case $\alpha\geq 0$.

\medskip

{\em The case $\alpha < 0$. } 
In this case we use the inequality
$$
|t-c|^\alpha(t-c)-|t|^\alpha t\leq - 2^{\alpha}(\alpha+1) K^\alpha c\, ,
$$
which holds true for all $0<c<K$ and $t\in \R$ such that $|t|\leq K$. 

Let $C_1= \sup_\Omega |\overline{u}(x)|d(x)^\gamma$, which is finite by \eqref{bouest}. Then, for any $\delta>0$, one has $|\overline{u}|<\frac{C_1}{\delta^\gamma}$  in $\Omega_\delta$. Therefore, for any $0<\theta<1$ and $0<c<\frac{C_1}{\delta^\gamma}$, and for $x\in \Omega_\delta$, we have
$$
\begin{array}{c}
-|\grad (\theta \overline{u}-c)|^{\alpha} F\left( D^2(\theta \overline{u}-c)\right) +|\grad (\theta \overline{u}-c)|^\beta +\lambda |\theta \overline{u}-c|^{\alpha} (\theta \overline{u}-c)\\[2ex]
\quad  \leq \theta^{\alpha+1} f(x) - \lambda  (2C_1)^\alpha (\alpha+1)\delta^{-\alpha \gamma} c\, .
\end{array}
$$
We choose, as before,
$$
c=c_{\theta ,\delta}=\frac{|f^-|_\infty(1-\theta^{\alpha+1})}{\lambda (2C_1)^\alpha (\alpha+1)\delta^{-\alpha \gamma}}\, ,
$$
which is admissible for $\delta$ sufficiently small, since $\alpha>-1$. This yields
$$
-|\grad (\theta \overline{u}-c_{\theta ,\delta})|^{\alpha} F\left( D^2(\theta \overline{u}-c_{\theta ,\delta})\right) +|\grad (\theta \overline{u}-c_{\theta ,\delta})|^\beta +\lambda |\theta \overline{u}-c_{\theta ,\delta}|^{\alpha} (\theta \overline{u}-c_{\theta ,\delta})\leq f\quad \hbox{ in } \Omega_\delta .$$
On the other hand, by estimates \eqref{bouest} with $\nu=1$, we have
$$
\theta \overline{u}-c_{\theta ,\delta}\leq \underline{u}\quad \hbox{ on } \partial \Omega_\delta
$$
provided that
$$ 
\delta= \delta_\theta=\left( \frac{a (1-\theta)}{2(1+D)}\right)^{\frac{1}{\gamma_1}}\, .
$$
With this choice of $\delta$, we then deduce from Theorem \ref{comp} that $\theta \overline{u}-c_{\theta ,\delta_\theta}\leq  \underline{u}$ in $\Omega_{\delta_\theta}$.
Finally, we let $\theta\to 1$.  We observe that, 
by the restrictions  assumed on $\beta$ and $f$ in the case $\alpha<0$, we can choose $\gamma_1$ satisfying $\gamma_1>-\alpha \gamma$. Therefore,  $c_{\theta,\delta_\theta}\to 0$ as $\theta\to 1$, and we conclude that $\overline{u}\leq \underline{u}$ in $\Omega$.
           \end{proof}

\begin{rema}\label{new}
{\rm Let us put in evidence that estimates \eqref{bouest} imply that any solution $u$ of \eqref{eqlambdainfini} satisfies 
$$\lim_{d(x)\to 0} \frac{u(x)\, d(x)^\gamma}{C(x)}=1  \mbox{ if}\  \gamma>0, \quad \lim_{d(x)\to0}\frac{u(x)}{ |\log d(x)|\, C(x)}=1 \hbox{ if}\  \gamma=0\, .
$$
Moreover, if $f$ satisfies \eqref{f} with $\gamma_0=0$, then necessarily $\gamma_1=0$ and \eqref{bouest} reduce to
$$
\begin{array}{c}
(C(x)-\nu)d(x)^{-\gamma} -D\leq u(x)\leq (C(x)+\nu)d(x)^{-\gamma} +D\  \hbox{if } \gamma>0\, ,\\[1ex]
\left( C(x) - \nu\right)|\log d(x)| -D\leq u(x)\leq  \left( C(x) +\nu\right) |\log d(x)|+D
\ \hbox{ if } \gamma=0\, ,
\end{array}
$$
for any $\nu>0$, with $D>0$ depending in particular on $\nu$ and $\lambda$. The above estimates are the classical ones for explosive solutions firstly obtained in the semilinear case in \cite{LL}, where $C(x)$ is a constant function.  Also the case $\gamma_0=1$ has been considered in \cite{LL}, and in this case more refined estimates have been  obtained. In the nonlinear case, analogous estimates for $\gamma_0\geq 1$  would require further regularity assumptions on the non constant function $C(x)$.  Estimates \eqref{bouest}  are interesting in the intermediate cases $0\leq \gamma_0<1$, in which they are new also for linear operators and yield a uniqueness result in the non linear singular case $\alpha<0$. 
}\end{rema}

\section{A comparison principle  for non linear degenerate/singular equations without zero order terms}   \label{compzero}   
This section is devoted to some comparison principle for fully non linear equations  without zero order terms. For analogous results  concerning non singular operators,  see   \cite{BB}, \cite{BM}.               
  \begin{theo}\label{2.4}
Let $b$ be a continuous and bounded function in $\Omega$  and, when $\alpha \neq 0$, let  $f$ 
be a bounded continuous function such that $f \leq -m<0$.   Let $u$  and $v$ be respectively sub  
and super solution of 
   \begin{equation}\label{eq24}-|\nabla u |^\alpha F( D^2 u) +b(x) | \nabla u |^\beta = f\quad \mbox{in}\ \Omega.\end{equation}
If  $u$  and  $v$  are bounded and at least one of the two is Lipschitz continuous then the comparison principle holds i.e. 
$$ u \leq v\ \mbox{ on }\ \partial\Omega\Rightarrow u\leq v\ \mbox{ in }\ \Omega.$$ 
      \end{theo}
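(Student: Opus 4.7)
I would argue by contradiction: suppose $M := \max_{\overline\Omega}(u-v) > 0$. Since the equation has no zero order term, Theorem \ref{comp} cannot be applied directly to $u$ and $v$. The strategy, inspired by \cite{LPR}, is to transform the unknowns via $w = g(u)$, $\tilde w = g(v)$, where $g : \R \to \R$ is a suitably chosen smooth strictly increasing function, so that the transformed inequalities contain a strictly increasing zero order term. Theorem \ref{comp} can then be applied to $w$ and $\tilde w$, yielding $w \leq \tilde w$ in $\Omega$ and hence $u \leq v$ by monotonicity of $g$, which contradicts the hypothesis.

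A natural choice is $g$ concave with a negative constant $g''/g'$, e.g.\ $g(t) = \lambda^{-1}(1 - e^{-\lambda t})$ for $\lambda > 0$ small (possibly after a preliminary translation of $u, v$). The chain rule gives
\[
|\nabla w|^\alpha F(D^2 w) = g'(u)^{\alpha+1}\, |\nabla u|^\alpha\, F\!\left( D^2 u - \lambda\, \nabla u \otimes \nabla u\right),
\]
and by uniform ellipticity this differs from $g'(u)^{\alpha+1}|\nabla u|^\alpha F(D^2 u)$ by an error of size $O(\lambda\, |\nabla u|^{\alpha+2})$ controlled by $\mathcal{M}^\pm$. Substituting the sub solution inequality for $u$ and re-expressing $|\nabla u| = g'(u)^{-1}|\nabla w|$, one obtains for $w$ an inequality of the form
\[
-|\nabla w|^\alpha F(D^2 w) + \tilde b(x, w)\, |\nabla w|^\beta + \zeta(w) \leq \tilde f(x, w),
\]
where $\zeta$ is strictly increasing because $-g'(u)^{\alpha+1}f$ is, after a sign flip, a strictly increasing function of $w$, precisely thanks to $f \leq -m < 0$. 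A symmetric computation provides the dual super solution inequality for $\tilde w$.

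The main obstacle is the ad hoc computation: one must align the powers of $g'(u)$, the Pucci error $\lambda\, |\nabla u|^{\alpha+2}$, and the original drift $b\,|\nabla u|^\beta$ so that the final inequality fits the framework of Theorem \ref{comp}. The strict negativity $f \leq -m$ produces a definite zero order gap that absorbs the error terms, provided $\lambda$ is chosen small in terms of $m$, $|u|_\infty$, $|v|_\infty$, and $|b|_\infty$. The Lipschitz hypothesis on one of $u$ or $v$ enters twice: first, to convert $|\nabla u|^{\alpha+2}$ into $|\nabla u|^\beta$ via $\beta \leq \alpha+2$ together with the local boundedness of $|\nabla u|$, so the Pucci error can be absorbed into the $|\nabla w|^\beta$ term; and second, to ensure the transformed drift $\tilde b$ has the regularity required by Theorem \ref{comp}. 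Once this algebra is carried out, Theorem \ref{comp} applies to $w$ and $\tilde w$ with the strictly monotone $\zeta$, delivering $w \leq \tilde w$ and closing the contradiction.
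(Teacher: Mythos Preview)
Your overall strategy --- a nonlinear change of unknown that turns the strict negativity $f\leq -m<0$ into a zero order monotonicity --- is exactly the idea the paper uses (and attributes to \cite{BB}, \cite{BM}, \cite{LPR}). The gap is in the plan to \emph{conclude} via Theorem~\ref{comp}.

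After the substitution $w=g(u)$ (or, equivalently, $u=\varphi(z)$ with $\varphi=g^{-1}$), the transformed inequality for $w$ reads, in your notation,
\[
-|\nabla w|^\alpha F(D^2w)+b(x)\,g'(u)^{\alpha+1-\beta}\,|\nabla w|^\beta
-A\lambda\,g'(u)^{-1}\,|\nabla w|^{\alpha+2}\ \le\ g'(u)^{\alpha+1}f(x),
\]
with $g'(u)=1-\lambda w$. Every coefficient here depends jointly on $x$ \emph{and} on the unknown $w$: the drift, the Pucci error, and the right-hand side. The hypotheses of Theorem~\ref{comp} require $b$ to be a function of $x$ alone, $\zeta$ a function of the unknown alone, and $f$, $g$ functions of $x$ alone. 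The transformed problem does not have this separated structure, and the same obstruction arises for $\tilde w=g(v)$, where in addition the Pucci error enters with the \emph{opposite} ellipticity constant. So $w$ and $\tilde w$ are sub/super solutions of \emph{different} equations, and Theorem~\ref{comp} cannot be invoked. Bounding the $w$-dependent coefficients by their extrema destroys exactly the strict monotonicity you need.

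What the paper does instead is keep the full Hamiltonian
\[
H(x,s,p)=\frac{-a\,\varphi''(s)}{\varphi'(s)}\,|p|^{\alpha+2}+b(x)\,\varphi'(s)^{\beta-\alpha-1}|p|^\beta+\frac{-f(x)}{\varphi'(s)^{\alpha+1}}
\]
after writing $u=\varphi(z)$, prove that $\partial_s H(x,s,p)\ge C>0$ for a suitable $\varphi$ (this is the ``ad hoc computation'' you allude to, and it uses $-f\ge m$, $\beta\le\alpha+2$, and the boundedness of $u,v$), and then carry out the doubling of variables/Ishii's lemma argument \emph{directly} on the equation for $z$ and $w$. The Lipschitz hypothesis enters at a different spot than you suggest: it guarantees that $p_j=j(x_j-y_j)$ stays bounded in the doubling, which is essential because $H$ contains $|p|^{\alpha+2}$ and $|p|^\beta$ with $s$-dependent (hence non-uniform) coefficients; without a bound on $|p_j|$ one cannot pass to the limit. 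Finally, note that for $\alpha=0$ the assumption $f\le -m$ is \emph{not} available; the paper treats that case separately by a direct perturbation $u_\epsilon=u+\epsilon e^{-\kappa x_1}-\epsilon$ and then applies Theorem~\ref{comp}, which your sketch does not cover.
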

    
\begin{proof}Without loss of generality,
we will suppose that $u$ is Lipschitz continuous. 

The case $\alpha = 0$ is quite standard, it is enough to construct strict sub solutions that converge  uniformly to $u$. 
For $\kappa>0$   to be chosen, let
$u_\epsilon = u+ \epsilon e^{-\kappa x_1} -\epsilon$, with e.g.  $\Omega\subset \{x_1>0\}$. 
By the mean value theorem:  
  $$|\nabla u_\epsilon |^\beta  \leq
                                   |\nabla u |^\beta  + \beta  \kappa \epsilon e^{-\kappa x_1}\left(|\nabla u| + \epsilon k \right)^{\beta -1}.$$
 Since $\beta\leq 2$ in the case $\alpha=0$, we can  choose $\kappa$  such that $a \kappa > 2\beta  (|\nabla u |_\infty + \epsilon k)^{\beta -1}$.
  Then, $u_\epsilon$ is a 
strict sub solution of \eqref{eq24}, being
$$\begin{array}{rl}
F(D^2 u_\epsilon ) & \geq  F( D^2 u) + a \kappa^2 \epsilon  e^{-\kappa x_1} \\[2ex]
& \geq  F( D^2 u) +\beta k \epsilon  e^{-kx_1} (|\nabla u |_\infty + \epsilon k)^{\beta -1} + \frac{a \kappa ^2 \epsilon}{2}  e^{-\kappa  x_1}\\[2ex]
                   & \geq |\nabla u_\epsilon|^\beta -f + \frac{a \kappa ^2 \epsilon}{2}   e^{-\kappa  x_1}.
\end{array}
$$
 Furthermore $u_\epsilon\leq u\leq v$ on $\partial\Omega$.
We are now in a position to apply the comparison principle in Theorem \ref{comp}, and we obtain that 
$u_\epsilon \leq v$ in $\Omega$.  To conclude, let $\epsilon \to 0$. This computation has been done for  a classical solution $u$, but, with obvious changes, it can be made rigorous for viscosity solutions.
\medskip

For the case $\alpha \neq 0$ and $f<0$,
we use the change of function $u = \varphi (z)$, $v = \varphi (w)$ 
with $$ \varphi(s) = -\gamma ( \alpha+1) \log \left( \delta + e^{-\frac{s}{\alpha+1}}\right).$$
This function is  used in \cite{BB}, \cite{BM}, \cite{BP},  \cite{LP}, \cite{LPR}.

We choose $\delta$  
small enough in order that the range of $\varphi$  covers the ranges of $u$ and $v$.
The constant $\gamma$ will be chosen small enough depending only on $a$, $\alpha$, $\beta$, $\inf_\Omega (-f)$ and $|b|_\infty$; in this proof, any constant of this type will be called universal .  Observe 
that $\varphi^\prime>0$ while $\varphi^{\prime \prime} <0$. 
       
In the viscosity sense,   $z$ and $w$  are respectively sub and super solution of 
         \begin{equation}\label{eq1}-|\nabla z |^\alpha F( D^2 z +  \frac{\varphi^{\prime \prime}(z)}{\varphi^\prime (z)}\nabla z \otimes \nabla z)  +b(x) \varphi^\prime (z)^{ \beta-\alpha-1} | \nabla z |^\beta + \frac{-f}{(\varphi^\prime(z) )^{ \alpha+1}} = 0.
        \end{equation} 
 We define 
  $$H(x, s, p) =  \frac{-a\varphi^{\prime \prime}(s)}{\varphi^\prime (s)} |p|^{2+ \alpha} + b(x) \varphi^\prime (s)^{ \beta-\alpha-1} | p |^\beta + \frac{-f(x)}{\varphi^\prime(s) ^{ \alpha+1}}. $$
The point is to prove that  at $\bar x$, a maximum point  of $ z-w$, $\frac{ \partial H ( \bar x, s,p)}{\partial s} >0$ for all 
$p$. This will be sufficient to get a contradiction. 
A simple computation gives
           $$\varphi^\prime =\frac{ \gamma  e^{-\frac{s}{\alpha+1}}}{\delta + e^{-\frac{s}{\alpha+1}}},\ 
 \varphi^{\prime \prime} =\frac{ -\gamma\delta e^{-\frac{s}{\alpha+1}}}{(\alpha+1) ( \delta + e^{-\frac{s}{\alpha+1}})^2}.$$
Hence 
              $$\left(  \frac{ -\varphi^{\prime \prime}}{ \varphi^\prime } \right) ^\prime = \frac{\delta}{(\alpha +1)^2} 
              \frac{e^{-\frac{s}{\alpha+1}}}{(\delta + e^{-\frac{s}{\alpha+1}})^2}
   \ \mbox{i.e.}\  \left(  \frac{ -\varphi^{\prime \prime }}{\varphi^\prime } \right) ^\prime =  -\frac{\varphi^{ \prime \prime }}{( \alpha +1) \gamma}>0.$$
Differentiating $H$ with respect to $s$ gives: 
        $$ \partial _s H = a|p|^{ \alpha+2}  \frac{ -\varphi^{\prime \prime }}{( \alpha + 1) \gamma} 
        + (-f)\frac{ -(\alpha+1) \varphi^{\prime \prime}}{( \varphi^\prime)^{ \alpha+2}} +b(x)  |p|^\beta ( \beta-\alpha-1)( \varphi^\prime )^{ \beta-\alpha-2} \varphi^{\prime \prime}. $$
Since $-\varphi^{ \prime \prime}$  is positive, we need to prove that 
         
$$K:=\frac{ a |p|^{ \alpha+2}}{( \alpha+1) \gamma}+ (-f) {\frac{\alpha+1}{( \varphi^\prime )^{ \alpha+2} }} -|b|_\infty |p|^\beta ( \beta-\alpha-1) (\varphi^\prime )^{ \beta-\alpha-2}>0.$$
We start by treating the case $\beta<\alpha+2$.

Observe first that the boundedness of $u$ and $v$, implies that there exists universal  positive constants $c_o$ and $c_1$ such that 
$$\label{gammab}c_o\gamma\leq\varphi^\prime\leq c_1\gamma.
$$ 
Hence, it is easy to see that there exist three positive  universal constants $C_i$, $i=1,2,3$  such that
     $$K>\frac{ C_1 |p|^{ \alpha+2}}{\gamma}+\frac{C_2}{\gamma^{\alpha+2}}-\frac{C_3|p|^\beta}{\gamma^{\alpha+2-\beta}}.$$
We choose $\gamma=\min \left\{ 1, (\frac{C_3}{C_2})^\beta, (\frac{C_3}{C_1})^\frac{1}{\alpha+1-\beta}\right\}$.    With this choice of $\gamma$, 
for $|p|\leq 1$,
$$
\frac{ C_1 |p|^{ \alpha+2}}{\gamma}+\frac{C_2}{\gamma^{\alpha+2}}-\frac{C_3|p|^\beta}{\gamma^{\alpha+2-\beta}} \geq  \frac{C_2}{\gamma^{\alpha+2}}-\frac{C_3}{\gamma^{\alpha+2-\beta}} >0;
$$
while for $|p|\geq 1$,
$$\frac{ C_1 |p|^{ \alpha+2}}{\gamma}+\frac{C_2}{\gamma^{\alpha+2}}-\frac{C_3|p|^\beta}{\gamma^{\alpha+2-\beta}} \geq   \frac{(C_1) |p|^{ \alpha+2}}{\gamma}-\frac{C_3|p|^\beta}{\gamma^{\alpha+2-\beta}} >0.
$$
If $\beta = \alpha+2$, just take $\gamma < \frac{a}{( \alpha+1) | b|_\infty}$. 
               
This gives  that for $\gamma$ small enough depending only on $\min (-f)$ , $\alpha$, $|b|_\infty$  and $\beta$ one has, for some universal constant $C$,
\begin{equation}\label{gamma}
\partial_s H( x, s, p)\geq C>0.
\end{equation}
We now conclude the proof of the comparison principle.

We will distinguish the case $\alpha >0$ and $\alpha <0$. In the first case we introduce 
$ \psi_j(x, y) = z(x)-w(y)-\frac{j}{2} |x-y|^2$ while in the second case we use 
$ \psi_j(x, y) = z(x)-w(y)-\frac{j}{q} |x-y|^{q}$ where $q > \frac{ \alpha+2 }{\alpha+1}$. 
We detail the case $\alpha >0$. 
                   
Suppose by contradiction that $u> v$ somewhere in $\Omega$, then $z > w$ somewhere, since $\varphi$ is increasing,  while on the boundary $z \leq w$. Then the supremum of $z-w$ is positive and it is achieved inside $\Omega$. Hence $\psi_j$ reaches a positive maximum in $(x_j,y_j)\in\Omega\times\Omega$.

By Ishii's lemma \cite{I1},    there exists $(X_j, Y_j) \in  S\times S$ such that 
$$  (p_j, X_j)\in \overline{J}^{2,+} z(x_j),
                  \ (p_j, -Y_j)\in \overline{J}^{2,-} w(y_j),\ \mbox{ with}\ p_j=j(x_j-y_j) $$ 
 and 
$$\left( \begin{array}{cc} X_j & 0 \\0& Y_j\end{array} \right) \leq  j \left( \begin{array} {cc}I&-I\\-I&I\end{array}\right). $$
On $(x_j, y_j)$, by a continuity argument,  for $j$ large enough one has 
                $$ z(x_j) > w(y_j) +  \frac{\sup(z-w)}{2}.$$ 
Note for later purposes that since $z$ or $w$ are Lipschitz, $ p_j = j ( x_j-y_j)$ is bounded.
Observe that, the monotonicity of $\frac{ \varphi^{ \prime \prime }}{\varphi^\prime}$ implies that 
$$N =p_j \otimes p_j \left( \frac{ \varphi^{ \prime \prime } (z(x_j))}{\varphi^\prime ( z(x_j))}- \frac{ \varphi^{ \prime \prime }(w(y_j))}{\varphi^\prime(w(y_j))}\right)\leq 0.$$ 
Using the fact that $z$ and $w$ are respectively sub and super solutions of the equation  (\ref{eq1}), the estimate  \eqref{gamma} and that $H$ is decreasing in the second variable,  one obtains:  
                       \begin{eqnarray*}
     0& \geq & \frac{ -f(x_j)}{(\varphi^\prime )^{ \alpha+1} (z(x_j))} 
                        - |p_j|^\alpha F( X_j + \frac{ \varphi^{ \prime \prime } (z(x_j))}{\varphi^\prime ( z(x_j))}p_j \otimes p_j) + b(x_j)|p_j |^\beta \varphi^\prime ( z(x_j))^{\beta-\alpha-1} \\
     & \geq &\frac{ -f(x_j)}{(\varphi^\prime )^{ \alpha+1} (z(x_j))}  -|p_j |^\alpha F( -Y_j+  \frac{\varphi^{ \prime \prime } (w(y_j))}{\varphi^\prime ( w(y_j))} p_j \otimes p_j ) + \\
   &&+   a |p_j|^{2+ \alpha} \left( \frac{ \varphi^{ \prime \prime } (w(y_j))}{\varphi^\prime ( w(y_j))} -\frac{ \varphi^{ \prime \prime } (z(x_j))}{\varphi^\prime ( z(x_j))}\right)+  |p_j |^\beta  b(x_j)(\varphi^\prime ( z(x_j))^{ \beta-\alpha-1}\\
    & \geq & \frac{f(y_j) -f(x_j)}{( \varphi^\prime ( w(y_j))^{ \alpha+1} }+ (b(x_j)-b(y_j)) |p_j |^\beta ( \varphi^\prime ( w(y_j)))^{ \beta-\alpha-1}\\
&&+ H(x_j, z(x_j), p_j)- H(x_j, w(y_j),p_j)\\                        
 &\geq&C(z(x_j)-w(y_j))+  \frac{o(1)}{\gamma^{\alpha+1}}.          \end{eqnarray*}
Here we have used the continuity of $f$ and $b$, the boundedness of $p_j$ and  that 
$$\psi(x_j,y_j)\geq \sup (\psi(x_j,x_j), \psi(y_j, y_j)).$$ 
Passing to the limit one gets a contradiction, since $(x_j,y_j)$ converges to $(\bar x,\bar x)$ such that $z(\bar x)>w(\bar z)$.  This ends the case $\alpha\geq 0$.  
                       
In the case $\alpha<0$, the proof is similar but we need to make sure that one can choose $x_j\neq y_j$. 
This can be done proceeding  as in \cite{BDL1}.
\end{proof}

\section{Ergodic pairs}\label{sectionerg}
 In this section we consider, for $c\in \R$, the equation 
\begin{equation}\label{ergerg} -|\nabla u |^\alpha F(D^2 u) + |\nabla u |^\beta  = f + c\ \mbox{in}\ \Omega.\end{equation}
 \begin{defi}Suppose that $c$ is some constant (depending on $f$,  $\Omega$, $\beta$, $\alpha$,  and $F$)  such that there exists $\varphi  \in {\cal C}(\Omega)$, solution of \eqref{ergerg}, such that $\varphi \rightarrow +\infty$ at $\partial\Omega$. We will say that $c$ is an ergodic constant, $\varphi$ is an ergodic function and $(c, \varphi)$ is an ergodic pair. 
   \end{defi}
    We suppose, as usual,  that $\alpha>-1$, $\beta\in (\alpha+1,\alpha+ 2]$  and  recall that  
  $\gamma  = \frac{2+ \alpha -\beta}{\beta-\alpha-1}$ and $C(x)$ satisfies \eqref{C(x)}. In the following subsections, we prove the existence and show several properties of ergodic pairs.
  
 \subsection{Existence of   ergodic constants and boundary behavior of ergodic functions}
Theorem \ref{sympaetpas} provides the existence of a nonnegative ergodic  constant under the assumption that problem \eqref{eq3} does not have a solution. In the next result  we obtain the existence of ergodic constants using approximating explosive solutions.
     \begin{theo}\label{erginfi}
Let $F$ and $f$ be as in Theorem \ref{exilambdainf}, and assume further that $f$ is locally Lipschitz continuous in $\Omega$.
 Then, there exists an ergodic constant $c\in \R$.
  \end{theo}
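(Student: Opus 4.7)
The strategy parallels that of Theorem \ref{sympaetpas}, but starting from the explosive solutions of Theorem \ref{exilambdainf} rather than from Dirichlet solutions. For each $\lambda>0$ I select an explosive solution $u_\lambda$ of \eqref{eqlambdainfini} and fix a reference point $x_0\in\Omega$. Evaluating the estimate \eqref{bouest} at $x_0$ gives $|u_\lambda(x_0)|\leq A_0+D_1\lambda^{-1/(\alpha+1)}$ with constants $A_0,D_1$ independent of $\lambda$, so $\lambda|u_\lambda(x_0)|^{\alpha+1}$ is uniformly bounded. Along a subsequence $\lambda_n\to 0$, the quantity $c_n:=-\lambda_n|u_{\lambda_n}(x_0)|^\alpha u_{\lambda_n}(x_0)$ converges to some $c\in\R$. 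Setting $v_\lambda:=u_\lambda-u_\lambda(x_0)$, the ergodic pair will be $(c,v)$ with $v$ a locally uniform limit of $v_{\lambda_n}$.

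\textbf{Compactness and limiting equation.} The bound \eqref{bouest} also shows $\lambda|u_\lambda|^\alpha u_\lambda$ uniformly bounded on every compact subset of $\Omega$, so $\|f-\lambda|u_\lambda|^\alpha u_\lambda\|_{L^\infty(K')}$ is uniformly bounded for compacts $K'\subset\Omega$. Theorem \ref{lipsol} (applied in a slightly enlarged subdomain, or equivalently by rescaling Proposition \ref{lipunif}) yields $u_\lambda$ locally uniformly Lipschitz in $\Omega$; together with $v_\lambda(x_0)=0$ this gives $v_\lambda$ locally uniformly bounded and equicontinuous. Arzel\`a--Ascoli produces a locally uniform limit $v$, locally Lipschitz in $\Omega$. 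Rewriting the equation for $v_\lambda$ as
\[
-|\nabla v_\lambda|^\alpha F(D^2 v_\lambda)+|\nabla v_\lambda|^\beta=f+c_\lambda+R_\lambda,\qquad R_\lambda:=\lambda\bigl(|u_\lambda(x_0)|^\alpha u_\lambda(x_0)-|u_\lambda|^\alpha u_\lambda\bigr),
\]
and using $\lambda|u_\lambda(x_0)|^\alpha=O(\lambda^{1/(\alpha+1)})$ together with a mean value estimate gives $R_\lambda\to 0$ locally uniformly. Stability of viscosity solutions then yields that $v$ solves \eqref{ergerg}.

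\textbf{Boundary blow-up --- the main obstacle.} Showing $v\to+\infty$ at $\partial\Omega$ is the delicate step: simply subtracting the pointwise bounds \eqref{bouest} produces the divergent correction $-2D_1\lambda^{-1/(\alpha+1)}$. Instead I use the sub solution
$\varphi^s(x):=C(x)(d+s)^{-\gamma}-\nu(d+s)^{\gamma_1-\gamma}$
from the proof of Theorem \ref{exilambdainf} --- \emph{without} the $-D$ shift --- which satisfies
\[
-|\nabla\varphi^s|^\alpha F(D^2\varphi^s)+|\nabla\varphi^s|^\beta+\lambda|\varphi^s|^\alpha\varphi^s\leq f \quad \text{in } \Omega\setminus\overline\Omega_{2\delta},
\]
for $s,\delta$ small and uniformly in $\lambda$, with large slack coming from the diverging leading negative term as $d\to 0$. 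Let $B$ be a uniform-in-$\lambda$ Lipschitz bound of $u_\lambda$ on $\overline\Omega_\delta$, so $u_\lambda(x)\geq u_\lambda(x_0)-B$ there, and set $M_s:=\max_{\partial\Omega_{2\delta}}\varphi^s$. Consider the shifted function
$\tilde\varphi_\lambda^s:=\varphi^s+u_\lambda(x_0)-B-M_s$, which by construction satisfies $\tilde\varphi_\lambda^s\leq u_\lambda$ on $\partial\Omega_{2\delta}\cup\partial\Omega$. A direct computation using $\lambda|u_\lambda(x_0)|^{\alpha+1}\leq C$ shows that the perturbation of the zero-order term induced by the shift is bounded by a constant independent of $\lambda$, which is absorbed by the slack for $\delta,s$ small; thus $\tilde\varphi_\lambda^s$ remains a sub solution of \eqref{eqlambdainfini} in $\Omega\setminus\overline\Omega_{2\delta}$. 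Theorem \ref{comp} then gives $\tilde\varphi_\lambda^s\leq u_\lambda$ there, i.e., $v_\lambda\geq\varphi^s-B-M_s$ --- a lower bound independent of $\lambda$. Sending $\lambda\to 0$ and then $s\to 0$ yields $v(x)\geq C(x)d(x)^{-\gamma}-\nu d(x)^{\gamma_1-\gamma}-B-M_0$, which blows up as $d(x)\to 0$. The verification that the large shift by $u_\lambda(x_0)$ --- whose magnitude may grow as $\lambda^{-1/(\alpha+1)}$ --- preserves the sub solution property is the main technical obstacle, resolved precisely by the uniform bound on $\lambda|u_\lambda(x_0)|^{\alpha+1}$ established at the outset.
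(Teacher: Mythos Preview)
Your argument is correct and follows essentially the same approach as the paper's proof: extract $c$ from $\lambda|u_\lambda(x_0)|^\alpha u_\lambda(x_0)$, obtain local compactness of $v_\lambda$ via Theorem~\ref{lipsol}, pass to the limit in the equation, and establish boundary blow-up by comparison with a shifted barrier whose zero-order contribution stays uniformly bounded thanks to the control on $\lambda|u_\lambda(x_0)|^{\alpha+1}$. The paper uses the slightly simpler barrier $\phi$ of \eqref{phi} (shifting by the bounded quantity $\min_{\partial\Omega_{\delta_0}}V_\lambda$) rather than $\varphi^s$ from Theorem~\ref{exilambdainf}, and for $R_\lambda\to 0$ it explicitly distinguishes the cases $\alpha\le 0$ (H\"older inequality $\big||a|^\alpha a-|b|^\alpha b\big|\le |a-b|^{\alpha+1}$) and $\alpha>0$ (mean value) --- your mean-value phrasing needs that small adjustment when $\alpha<0$.
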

\begin{proof}
By Theorem \ref{exilambdainf}, for $\lambda>0$ there exists a solution $U_\lambda$ of problem \eqref{eqlambdainfini}, which satisfies estimates \eqref{bouest}.  Recalling the dependence on $\lambda$ of the constant $D$ which appears in \eqref{bouest}, we see that $\lambda |U_\lambda|^\alpha U_\lambda$ is locally  bounded in $\Omega$, uniformly with respect to $0<\lambda<1$. Let us fix an arbitrary point $x_0\in \Omega$. Then,   there exists $c\in \R$ such that, up to a sequence $\lambda_n\to 0$, 
$$\lambda |U_\lambda (x_0)|^\alpha U_\lambda(x_0)\to -c\, .$$ 
On the other hand, Theorem \ref{lipsol} yields that $U_\lambda$ is locally uniformly Lipschitz continuous. Therefore, for $x$   in a compact subset of  $\Omega$, one has
$$
\lambda \left| |U_\lambda(x)|^\alpha U_\lambda(x)- |U_\lambda(x_0)|^\alpha U_\lambda(x_0)\right|\leq \lambda |U_\lambda (x)- U_\lambda(x_0)|^{\alpha+1}\to 0\quad \hbox{ if } \alpha\leq 0\, ,
$$
as well as, using again estimates \eqref{bouest},
$$
\lambda \left| |U_\lambda(x)|^\alpha U_\lambda(x)- |U_\lambda(x_0)|^\alpha U_\lambda(x_0)\right|\leq \lambda \frac{K}{\lambda^\frac{\alpha}{\alpha+1}}|U_\lambda (x)- U_\lambda(x_0)| \to 0\quad \hbox{ if } \alpha>0 \, .
$$
It then follows that $c$ does not depend on the choice of $x_0$ and, up to a sequence and locally uniformly in $\Omega$, one has
$$
\lambda |U_\lambda|^\alpha U_\lambda\to -c\, .
$$
Moreover, the function $V_\lambda(x)= U_\lambda (x)-U_\lambda(x_0)$ is locally uniformly bounded,  locally uniformly Lipschitz continuous  and satisfies
$$
-|\nabla V_\lambda|^\alpha F(D^2 V_\lambda)+|\nabla V_\lambda|^\beta=f-\lambda |U_\lambda|^\alpha U_\lambda\ \hbox{ in } \Omega\, .
$$
If $V$ denotes the local uniform limit of $V_\lambda$ for a sequence $\lambda_n\to 0$, then one has
$$
-|\nabla V|^\alpha F(D^2 V)+|\nabla V|^\beta=f+c\ \hbox{ in } \Omega\, .
$$
Finally, arguing as in the proof of Theorem \ref{sympaetpas} and using Theorem \ref{comp}, we have that, for some $\delta_0>0$ sufficiently small,
$$
V_\lambda \geq \phi +\min_{d(x)=\delta_0}V_\lambda\quad \hbox{ in }\Omega\setminus \Omega_{\delta_0}\, ,
$$
with $\phi$ defined in \eqref{phi} for arbitrary $s>0$. Letting $\lambda, s\to 0$   we deduce that $V(x)\to +\infty$ as $d(x)\to 0$. This shows that $(c,V)$ is an ergodic pair and concludes the proof.

\end{proof}

We now prove that ergodic functions satisfy on the boundary the same asymptotic identities as the explosive solutions of \eqref{eqlambdainfini}.
 \begin{theo}\label{esterg}
 Let $F$ and $f$ be as in Theorem \ref{exilambdainf}. Then, any ergodic function $u$ satisfies 
 \begin{equation}\label{bouasym}
\lim_{d(x)\to 0} \frac{u(x)\, d(x)^\gamma}{C(x)}=1  \mbox{ if}\  \gamma>0, \quad \lim_{d(x)\to0}\frac{u(x)}{ |\log d(x)|\, C(x)}=1 \hbox{ if}\  \gamma=0\, .
\end{equation}
\end{theo}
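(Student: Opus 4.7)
The plan is to sandwich any ergodic function $u$ between the explicit barriers already constructed in the proof of Theorem \ref{exilambdainf}, invoking the comparison principle Theorem \ref{comp} with zeroth order term $\zeta \equiv 0$ and strict inequality between the right-hand sides. Recall that $u$ satisfies $-|\nabla u|^\alpha F(D^2 u) + |\nabla u|^\beta = f + c$ in $\Omega$ and blows up on $\partial \Omega$. I focus on the case $\gamma > 0$; the case $\gamma = 0$ is identical with logarithmic barriers.

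For the upper bound, I would fix $\nu > 0$ and an admissible $\gamma_1$, and set $\varphi(x) = C(x)/d(x)^\gamma + \nu/d(x)^{\gamma-\gamma_1}$. The computation performed for $\varphi_1$ in the proof of Theorem \ref{exilambdainf} shows that, for some $\delta_0 > 0$ small enough,
\[
-|\nabla \varphi|^\alpha F(D^2 \varphi) + |\nabla \varphi|^\beta \geq f + c + h(x) \quad \hbox{in } \{0 < d(x) < \delta_0\},
\]
with $h(x) > 0$ (in fact diverging to $+\infty$ as $d(x) \to 0$); so $\varphi$ is a strict supersolution of the ergodic equation. The obstacle is that both $u$ and $\varphi$ blow up on $\partial \Omega$, so the boundary comparison required by Theorem \ref{comp} is not directly available. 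To bypass this, for $0 < \epsilon < \delta_0/2$ I introduce the inwardly shifted barrier $\varphi_\epsilon(x) = C(x)/(d(x)-\epsilon)^\gamma + \nu/(d(x)-\epsilon)^{\gamma-\gamma_1} + K$ on $\Omega'_\epsilon = \{\epsilon < d(x) < \delta_0\}$. Since $\nabla(d-\epsilon) = \nabla d$ and $D^2(d-\epsilon) = D^2 d$, the same computation shows $\varphi_\epsilon$ is still a strict supersolution on $\Omega'_\epsilon$, uniformly in $\epsilon$. On the inner face $\{d=\epsilon\}$ one has $\varphi_\epsilon = +\infty \geq u$; on the outer face $\{d=\delta_0\}$, $u$ is bounded and $\epsilon$-independent, so $K=K(\nu)$ chosen large enough gives $\varphi_\epsilon \geq u$ there. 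Since $u$ is bounded above on $\Omega'_\epsilon$ and $\varphi_\epsilon$ is bounded below by $K$, Theorem \ref{comp} applies and yields $u \leq \varphi_\epsilon$ in $\Omega'_\epsilon$. Letting $\epsilon \to 0$ produces
\[
u(x) \leq C(x)/d(x)^\gamma + \nu/d(x)^{\gamma-\gamma_1} + K \quad \hbox{in } \{0 < d(x) < \delta_0\}.
\]

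For the lower bound, I would use the strict sub-solution $\varphi^s(x) = C(x)/(d+s)^\gamma - \nu/(d+s)^{\gamma-\gamma_1} - K'$ from the same proof, which is already finite on $\partial\Omega$ thanks to the shift $s > 0$: on $\partial\Omega$, $u = +\infty \geq \varphi^s$; on $\{d=\delta_0\}$, a suitable $K'=K'(\nu)$ gives $\varphi^s \leq u$. Applying Theorem \ref{comp} with $\varphi^s$ as sub-solution and $u$ as super-solution yields $\varphi^s \leq u$ in $\{d < \delta_0\}$, and sending $s \to 0$ produces the matching lower barrier.

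Dividing both bounds by $C(x)/d(x)^\gamma$ and passing to $d(x) \to 0$, the terms $K d(x)^\gamma / C(x)$ vanish since $\gamma > 0$. If $\gamma_1 > 0$ is admissible, the terms $\nu d(x)^{\gamma_1}/C(x)$ also vanish and the limit is exactly $1$; if only $\gamma_1 = 0$ is available (i.e.\ $\gamma_0 = 0$ in \eqref{f}), the limsup and liminf of $u(x)\, d(x)^\gamma/C(x)$ are trapped in an interval of width $O(\nu)$ around $1$ (using that $C(x)$ is bounded away from $0$ by uniform ellipticity of $F$), and letting $\nu \to 0$ concludes. The main technical obstruction throughout is the simultaneous blow-up of $u$ and the natural upper barrier on $\partial\Omega$; this is removed by the two shifts, namely inward by $\epsilon$ for the super-solution and the built-in outward shift $s$ in the sub-solution, each of which renders one side of the comparison finite on the relevant boundary while preserving the strict super/sub-solution property.
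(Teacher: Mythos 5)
Your argument is correct, and the upper bound is obtained exactly as in the paper: the authors also compare $u$ with the inwardly shifted barrier $\overline{w}_{\nu,\delta}(x)=(C(x)+\nu)(d(x)-\delta)^{-\gamma}$ (plus the constant $M_\nu=\sup_{d=\delta_0}u$) on the strip $\{\delta<d<\delta_0\}$, using that by \eqref{f} it is a strict supersolution so that Theorem \ref{comp} applies with $\zeta\equiv 0$, and then let $\delta\to 0$. Where you genuinely diverge is the lower bound. You run the symmetric comparison directly, using the outwardly shifted subsolution $\varphi^s=C(x)(d+s)^{-\gamma}-\nu(d+s)^{\gamma_1-\gamma}-K'$, which is finite on $\partial\Omega$ and is a strict subsolution of the ergodic equation in the strip because its leading term behaves like $-\nu\,(d+s)^{-\beta/(\beta-\alpha-1)+\gamma_1}\to-\infty$ while $f+c$ is bounded below; this is legitimate and all the hypotheses of Theorem \ref{comp} (sub bounded above, super bounded below, strict inequality of the right-hand sides) are satisfied. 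The paper instead bootstraps: from the upper bound it deduces that $|u|^\alpha u$ satisfies condition \eqref{f} (with $\gamma_0=1$) and is bounded below, rewrites the ergodic equation as the $\lambda=1$ explosive problem with right-hand side $f+c+|u|^\alpha u$, and invokes the two-sided estimates \eqref{bouest} of Theorem \ref{exilambdainf}. Your route is more direct and self-contained for the asymptotics \eqref{bouasym}, at the cost of redoing the subsolution computation; the paper's route reuses Theorem \ref{exilambdainf} as a black box and delivers the slightly stronger conclusion that ergodic functions satisfy the full estimates \eqref{bouest}, including the additive constant $D$ and the correction term of order $d^{\gamma_1-\gamma}$, not only the leading-order limit.
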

\begin{proof}
 As in the proof of Theorem \ref{exilambdainf}, we consider only  the case $\gamma>0$.
 
The computations made in the proof of Theorem \ref{exilambdainf} (for $\gamma_1=0$) show that, for all $\nu >0$  
and for $\delta_0>0$ sufficiently small,   the function
         $\overline{w}_{\nu, \delta}(x) :=\frac{C(x)+ \nu}{(d(x)-\delta)^\gamma}$ satisfies  for $\delta< d(x)<\delta_0$
         $$ -| \nabla  \overline{w}_{\nu, \delta} |^\alpha F(D^2 \overline{w}_{\nu, \delta})+ |\nabla \overline{w}_{\nu, \delta} |^\beta  \geq  c_1\, \nu 
         ( d(x)-\delta)^ {-\frac{\beta}{\beta-\alpha-1}}$$
where $c_1>0$ is a constant depending on $\alpha, \beta, a, A, |D^2d|_\infty$ and $|C|_{{\cal C}^2(\Omega)}$.  

By assumption \eqref{f} on $f$, this implies that
$$ -|\nabla  \overline{w}_{\nu, \delta}|^\alpha F(D^2 \overline{w}_{\nu, \delta})+ |\nabla \overline{w}_{\nu, \delta}|^\beta > f(x)+c=-|\nabla u|^\alpha F(D^2u)+|\nabla u|^\beta\ \hbox{ in } \Omega_\delta\setminus \Omega_{\delta_0}$$
for $\delta_0=\delta_0(\nu)$ small enough.
Hence, we are in the hypothesis   of  Theorem \ref{comp} and we deduce that
$$u \leq M_\nu + \overline{w}_{\nu, \delta}\ \hbox{ in } \Omega_\delta\setminus \Omega_{\delta_0},$$
with $M_\nu=\sup_{d(x) = \delta_0} u(x)$. 
Letting $\delta\to 0$  
we  obtain that 
$$  u   \leq M_\nu+ (C(x)+ \nu) d(x)^{-\gamma}  \mbox{ in }\ \Omega\setminus \Omega_{\delta_0}.$$
This in turn implies that
$$\lim_{d(x)\to 0} u(x)^{\alpha+1}d(x)^{\frac{\beta}{\beta-\alpha-1}-\gamma_0}=0$$
for all $\gamma_0$ such that
$$\gamma_0<\frac{\beta}{\beta-\alpha-1}-(\alpha+1) \gamma=\alpha +2.$$
Since $\alpha+2>1$, we obtain in particular that the function $|u|^\alpha u$ satisfies condition \eqref{f} with $\gamma_0=1$. Note also that $|u|^\alpha u$ is bounded from below in $\Omega$ since it is continuous and blows up  on the boundary.
Finally, we observe that $u$ satisfies
$$
-|\nabla u|^\alpha F(D^2u)+|\nabla u|^\beta +|u|^\alpha u=f+c+|u|^\alpha u\, ,$$
where the right hand side $f+c+|u|^\alpha u$ satisfies condition \eqref{f} with an exponent $\gamma_0= \min\{ \gamma_0(f), 1\}$, $\gamma_0(f)$ being the exponent appearing in the condition \eqref{f} satisfied by $f$. Hence, 
by applying Theorem \ref{exilambdainf},  we obtain that $u$ satisfies the boundary estimates \eqref{bouest} with $\lambda=1$ and the constant $D$ depending also on $u$ itself. Estimates \eqref{bouest} in turn imply relations \eqref{bouasym}.
 \end{proof}
 
\medskip          

\subsection{Uniqueness and further properties of the ergodic constant: proof of Theorem \ref{ABCDE}.}\label{properg}

Throughout this section we assume that $f$ is bounded and locally Lipschitz continuous.

In the introduction we have defined $\mu^\star \in \R\cup \{-\infty\}$ as
$$\mu^\star =  \inf  \{ \mu \, :\,  \exists \, \varphi \in { \cal C} ( \overline{ \Omega}), -| \nabla \varphi |^\alpha F( D^2 \varphi) + | \nabla \varphi |^\beta  \leq f+ \mu\}.$$
It is easy to see that  $\mu^\star \leq -\inf_\Omega f$. A better upper bound on $\mu^\star$ depending on the domain $\Omega$ is given by the following result.
\begin{prop}\label{lowerupperbounds} 
  If $\Omega\subset [0, R] \times \R^{N-1}$, then 
             $$ \mu^\star \leq -\inf_\Omega f - \frac{ K_1}{R^{ \frac{\beta}{\beta-\alpha-1}}}$$
for a  positive constant   $K_1=K_1(a,\alpha,\beta)$.
\end{prop}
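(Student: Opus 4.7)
The plan is to exhibit an explicit classical subsolution in the definition of $\mu^\star$. Setting $\mu := -\inf_\Omega f - K_1 R^{-\beta/(\beta-\alpha-1)}$, the inequality $f+\mu \geq -K_1 R^{-\beta/(\beta-\alpha-1)}$ on $\Omega$ reduces the proposition to producing a function $\varphi\in C^2(\overline\Omega)$ satisfying
$$-|\nabla\varphi|^\alpha F(D^2\varphi) + |\nabla\varphi|^\beta \,\leq\, -K_1 R^{-\beta/(\beta-\alpha-1)}\quad\text{in }\Omega,$$
with $K_1=K_1(a,\alpha,\beta)>0$; such a $\varphi$ is then automatically admissible in the infimum defining $\mu^\star$.

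My candidate would be the one-variable quadratic $\varphi(x) = \frac{\lambda}{2}(x_1+R)^2$ with $\lambda>0$ to be tuned. The shift by $R$ is the key design choice: when $\alpha\geq 0$ it prevents $|\nabla\varphi|$ from vanishing at the face $x_1=0$ of the strip, which would kill the favourable $-|\nabla\varphi|^\alpha F(D^2\varphi)$ term; when $\alpha<0$ it keeps $|\nabla\varphi|$ bounded away from zero so that the inequality holds in the classical sense and no viscosity subtlety arises. A direct computation gives $D^2\varphi=\lambda\,e_1\otimes e_1\geq 0$, and by uniform ellipticity \eqref{unifel} together with $1$-homogeneity of $F$ one has $F(D^2\varphi)\geq a\lambda$. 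Using the bounds $\lambda R\leq |\nabla\varphi|\leq 2\lambda R$ on $\Omega$, these yield
$$-|\nabla\varphi|^\alpha F(D^2\varphi)+|\nabla\varphi|^\beta \,\leq\, \lambda^{\alpha+1}R^\alpha\Bigl[-c_\alpha a + 2^\beta \lambda^{\beta-\alpha-1} R^{\beta-\alpha}\Bigr],$$
where $c_\alpha = \min\{1,2^\alpha\}>0$ absorbs the sign of $\alpha$.

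I would then choose $\lambda$ so that the bracket equals $-c_\alpha a/2$, namely $\lambda = (c_\alpha a/2^{\beta+1})^{1/(\beta-\alpha-1)}R^{-(\beta-\alpha)/(\beta-\alpha-1)}$. Substituting back, the upper bound becomes $-\frac{c_\alpha a}{2}\,\lambda^{\alpha+1} R^\alpha$, and the exponent of $R$ simplifies via the identity $\alpha(\beta-\alpha-1)-(\beta-\alpha)(\alpha+1)=-\beta$ to exactly $-\beta/(\beta-\alpha-1)$. This produces the desired estimate with $K_1$ a positive constant depending only on $a$, $\alpha$ and $\beta$. The argument contains no deep obstacle: the only delicate point is getting a single quadratic to work uniformly across the whole admissible range $\alpha>-1$, and the shift $x_1+R$ is precisely the device that handles both signs of $\alpha$ at once; once that is in place, the rest is exponent bookkeeping which matches by design because $\varphi$ scales with the natural power of $R$ dictated by the homogeneity structure of the operator.
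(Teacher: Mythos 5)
Your proof is correct and follows essentially the same strategy as the paper: exhibit an explicit one\hbox{-}dimensional classical strict subsolution depending only on $x_1$ and plug it into the definition of $\mu^\star$, with the exponent bookkeeping forced by homogeneity. The only difference is the choice of barrier (your shifted quadratic $\frac{\lambda}{2}(x_1+R)^2$ versus the paper's $C x_1^{\frac{\alpha+2}{\alpha+1}}$, whose exponent makes $|\nabla\varphi|^\alpha F(D^2\varphi)$ exactly constant); your shift has the minor merit of keeping $|\nabla\varphi|$ bounded away from zero, avoiding any discussion of the singular case $\alpha<0$ at $x_1=0$.
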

\begin{proof} The function $\varphi (x) = C x_1^{\frac{\alpha+2}{\alpha+1}}$ with $C = \left[\frac{a}{2(\alpha+1)}\right]^{\frac{1}{\beta-\alpha-1}}\left(\frac{\alpha+1}{\alpha+2}\right)R^{ -\frac{\beta}{( \alpha+1) ( \beta-\alpha-1)}}$
satisfies, for some constant $K_1=K_1(a, \alpha, \beta)$:
 $$- |\grad \varphi |^\alpha F(D^2\varphi)+|\grad \varphi |^\beta\leq-\frac{a(\alpha+2)^{1+ \alpha}}{( \alpha+1)^{2+ \alpha} }C^{1+ \alpha} + \frac{ ( \alpha+2)^{\beta }}{(\alpha+1)^\beta} C^\beta x_1^{ \frac{\beta}{\alpha+1}}
 \leq  - K_1R^{-\frac{\beta}{\beta-\alpha-1}}.$$
Hence, by its definition, $\mu^\star \leq -\inf_\Omega f- K_1R^{-\frac{\beta}{\beta-\alpha-1}}$. 
\end{proof}
We are now ready to give the proof of Theorem \ref{ABCDE}.        
\begin{proof}[Proof of Theorem \ref{ABCDE}] Here we set $c_\Omega=c$.

 The existence of $c$ is given by Theorem \ref{erginfi}.
\smallskip

{\em Proof of } \ref{A}.   Suppose that $c$ and $c^\prime$ are two ergodic constants, and let $\varphi$ and $\varphi ^\prime$ be respectively   corresponding ergodic functions. 
By Theorem \ref{esterg} the ratio of $\varphi$ and $\varphi^\prime$ goes to 1 as $d(x)\to 0$; 
hence, for any $\theta <1$, the supremum of $\theta \varphi-\varphi^\prime $ is achieved in the interior of $\Omega$ since
  $\theta \varphi-\varphi^\prime $ blows down to $-\infty$  as $d(x)\to 0$.
  
We observe that 
$$ \begin{array}{c}
- |\nabla ( \theta \varphi)|^\alpha F( D^2 ( \theta \varphi)) + |\theta \nabla \varphi |^\beta  \leq  \theta^{1+ \alpha} (f+c)\\[1ex]
-|\nabla \varphi'|^\alpha F(D^2 \varphi')+ |\nabla \varphi'|^\beta=f+c
\end{array}
$$ 
From standard  comparison arguments in  viscosity solutions theory, see \cite{usr},  it follows that at  
 a maximum point $\bar x$ of $\theta \varphi-\varphi^\prime $ one has
                  $f(\bar x)+c^\prime \leq \theta^{1+ \alpha} ( f(\bar x)+c).$
Letting $\theta  \to 1$, we get $ c' \leq c$. Exchanging the roles of $c$ and $c^\prime$  we conclude that $c=c'$. 
\smallskip

{\em Proof of } \ref{B}.   
Let $\mu<c$ and suppose by contradiction that there exists $\varphi   \in { \cal C} ( \overline{ \Omega})$  satisfying 
                  $$ -|\nabla\varphi |^\alpha F(D^2 \varphi) + |\nabla \varphi|^\beta \leq f+ \mu .$$
Let $u$ be an  ergodic function corresponding to $c$. Clearly,  $\sup_\Omega (\varphi- u)$ is attained in $\Omega$. 
Again by standard viscosity arguments, we obtain that  at   a maximum point $\bar x$ of $\varphi-u$ one has
                     $$ f(\bar x)+\mu \geq f(\bar x) + c\, , $$
which is a contradiction. Hence, we deduce
$$
\{ \mu \, :\,  \exists \, \varphi \in { \cal C} ( \overline{ \Omega}), -| \nabla \varphi |^\alpha F( D^2 \varphi) + | \nabla \varphi |^\beta  \leq f+ \mu\}\subset [c, +\infty)\, ,
$$
which implies that $\mu^\star$ is finite and $\mu^\star\geq c$.

On the other hand,  by definition of $\mu^\star$, for  any $\mu < \mu^\star$ the problem 
  $$\left\{ \begin{array}{cl}
                     -| \nabla u |^\alpha F(D^2 u) + |\nabla u |^\beta    = f + \mu & { \rm in} \ \Omega \\
                     u=0  & { \rm on} \ \partial \Omega
                     \end{array}\right.$$
does not have solution.
                       Theorem \ref{sympaetpas} then  implies that there exists an ergodic constant $c_{ f+ \mu}\geq 0$ for the right hand side  $f+ \mu$. On the other hand, by the uniqueness proved in \ref{A}. above, one has  $c=\mu+c_{f+\mu}$. Hence,  $c\geq \mu$ and, therefore, $c\geq \mu^\star$.
\medskip  
             
{\em Proof of } \ref{C}.
The nondecreasing monotonicity of $c$ with respect  to the domain $\Omega$ is an immediate consequence of point \ref{B}.  above and the definition of $\mu^\star$.

Let us now prove the "continuity" of $\Omega \mapsto c_\Omega$, in the  following weak sense. For $\delta>0$ small, let $c_\delta$ denote the 
 ergodic constant in $\Omega_\delta$.  Then, $c_\delta$ is nondecreasing as 
$\delta$ decreases to zero, and $c_\delta \leq  c=c_\Omega$. 

Let $u_\delta$ be an ergodic function in $\Omega_\delta$,   $x_0\in \Omega$ be a fixed point and let us set $v_\delta=u_\delta-u_\delta(x_0)$.
            
By  Theorem \ref{lipsol},  $v_\delta$ is locally uniformly bounded and locally
uniformly Lipschitz continuous in $\Omega_\delta$. Thus, up to a sequence $\delta_n\to 0$, $v_\delta$ converges locally uniformly in $\Omega$ to a solution $v$ of the equation with   right hand side $f+\lim_{\delta\to 0} c_\delta$. Moreover,  arguing as in the proof of  Theorem \ref{erginfi}, we have that
$v _\delta(x) \geq C_0 \left( d(x)-\delta\right)^{-\gamma}$ if $\gamma>0$, and  $v _\delta(x) \geq -C_0 \log \left( d(x)-\delta\right)$ if $\gamma=0$, for some  constant $C_0>0$ and for  $\delta<d(x)\leq \delta_0$. Letting $\delta\to 0$, we get that $v(x)\to +\infty$ as $d(x)\to 0$. Hence, $v$ is an ergodic function in $\Omega$ and, by point \ref{A}.,     $\lim_{\delta\to 0} c_\delta$ is the ergodic constant $c$.
 \medskip
       
{\em Proof of} \ref{D}. 

We prove that the constant $\mu^\star$ is not achieved. 
Suppose by contradiction that there exists $\varphi\in {\cal C} (\overline{ \Omega})$ such that 
                     $$-| \nabla \varphi |^\alpha F( D^2 \varphi) + |\nabla \varphi|^\beta  \leq   f+ \mu^\star=f+c \,. $$
On the other hand, let  $u$ be an ergodic function in $\Omega$.  

We observe that for all constants $M$, $\varphi+M$ is still a bounded sub solution, whereas $u$ is a solution satisfying $u=+\infty$ on $\partial \Omega$. Theorem \ref{2.4} applied in a smaller domain $\Omega_\delta$ then yields $u\geq \varphi +M$ for arbitrarily large $M$, which clearly is a contradiction.               
 
 A similar argument proves the strict increasing behavior of the ergodic constant. Let $\Omega' \subset \subset \Omega$  and      
        suppose by contradiction that $c_{\Omega'} = c_{ \Omega}$.  Let  $u_{ \Omega^\prime}$ and $u_\Omega$ be  ergodic functions respectively in $\Omega'$ and $\Omega$.  For every constant $M$, both $u_{\Omega}+M$ and $u_{\Omega'}$ satisfy \eqref{ergerg} in $\Omega'$, with $u_\Omega+M$ bounded and $u_{\Omega'}=+\infty$ on $\partial \Omega'$. Hence, Theorem \ref{2.4} yields the contradiction $u_{\Omega'}\geq u_{\Omega}+M$ for every $M$.
 \end{proof}

\begin{rema}
{\rm We remark that, thanks to Proposition \ref{lowerupperbounds}, the condition $\sup_\Omega f+c<0$ appearing in Theorem \ref{ABCDE}-\ref{D}. is satisfied in one of the following cases:
\begin{itemize}
\item[--] $f$ is constant in $\Omega$;

\item[--] the oscillation $\sup_\Omega f-\inf_\Omega f$ of $f$ is suitably small, in dependence of the length of the projections of $\Omega$ on the coordinated  axes;

\item[--] in at least one direction $\Omega$ is suitably narrow, in dependence of the oscillation  of $f$ in $\Omega$.
\end{itemize}} 
\end{rema}
\bigskip

{\bf  Acknowledgment}. 
Part of this work  has been done while  the first and third authors were visiting   the UMR 80-88, University of Cergy Pontoise,  and the second one was visiting  Sapienza University of Rome supported by  GNAMPA- INDAM
.


\begin{thebibliography}{99}
\bibitem{AQ} S. Alarc\'on, A. Quaas, {\em Large viscosity solutions for some fully nonlinear equations}, Nonlinear Differ. Equ. Appl. 20 (2013), 1453--1472
\bibitem{BB} G. Barles, J. Busca, { \em Existence  and  comparison results for fully  non linear degenerate  elliptic equations without zeroth order terms}, Communications in Partial Differential Equations , 26(11-12), (2001), 2323--2337.
\bibitem{BM} G. Barles, F. Murat, {\em Uniqueness and the maximum principle for quasilinear elliptic equations with quadratic growth conditions.} Archive for Rational mechanics and analysis 133.1 (1995), 77--101.
\bibitem{BP} G. Barles, A. Porretta, {\em Uniqueness  for unbounded solutions to stationary viscous Hamilton-Jacobi equation} Ann. Scuola  Norm. Sup Pisa, Cl Sci (5) , Bol V (2006), 107--136. 
\bibitem{BPT} G. Barles, A. Porretta, T. Tabet Tchamba, {\em On the large time behavior of solutions of the Dirichlet problem for subquadratic viscous Hamilton-Jacobi equations}  Journal de Math\'ematiques Pures et Appliqu\'ees,   94, (2010), 497--519. 
\bibitem{BD1} I. Birindelli, F. Demengel \emph{First eigenvalue and Maximum principle for fully nonlinear singular
operators},   Advances in Differential equations, Vol
(2006) \textbf{11} (1), 91--119. 
\bibitem{BDcocv} I. Birindelli, F. Demengel, \emph{  ${\mathcal C}^{1,\beta}$ regularity for Dirichlet problems associated to fully nonlinear degenerate elliptic 
equations},  ESAIM  COCV,  (2014), \textbf{20} (40), 1009--1024.  
\bibitem{BDL1} I. Birindelli, F. Demengel, F. Leoni, {\em Existence, comparison and regularity results for a class of  fully non linear  singular or degenerate equations}, in progress.
\bibitem{BDL2} I. Birindelli, F. Demengel, F. Leoni, {\em On the $W^{2,p}$ and ${ \cal C}^{1, \gamma}$ regularity for fully non linear  singular or degenerate equations},  in preparation. 
\bibitem{BEQ} J.   Busca, M.  Esteban, A. Quaas,
{\em Nonlinear eigenvalues and bifurcation problems for Pucci's operators},
Ann. Inst. H. Poincar\'e Anal. Non Lin\'eaire,  22 (2005), no. 2, 187--206. 
\bibitem{CDLP} I. Capuzzo Dolcetta, F. Leoni, A. Porretta, {\em H\"older's estimates for degenerate elliptic equations with coercive Hamiltonian}, Transactions of the American Mathematical Society, Vol 362, n.9,  (2010), 4511--4536. 
\bibitem{CDLV1} I. Capuzzo Dolcetta, F. Leoni, A. Vitolo, {\em On the inequality  $F(x,D^2u)\geq f(u)+g(u)|Du|^q$}, Math. Ann., Vol. 365, n.1-2 (2016), 423--448.
\bibitem{CDLV2} I. Capuzzo Dolcetta, F. Leoni, A.Vitolo, {\em Entire subsolutions of fully nonlinear degenerate elliptic equations},  Bull. Inst. Math. Acad. Sinica (New Series) 9(2) (2014), 147--161.
\bibitem{DG} F. Demengel, O. Goubet, {\em  Existence of boundary blow up solutions for singular or degenerate fully nonlinear equations},   Comm.  Pure and
Applied Anal.,  12 (2) (2013), 621--645.
\bibitem{EFQ} M. Esteban, P. Felmer, A. Quaas, {\em Super-linear elliptic equation for fully nonlinear operators
without growth restrictions for the data},  Proc. Roy. Soc. Edinburgh, 53, (2010), 125--141
\bibitem{I1} H. Ishii,  \emph{ Viscosity solutions  of Nonlinear fully nonlinear equations} Sugaku Expositions , Vol 9, number 2, December 1996. 
\bibitem{usr} M.G. Crandall,H. Ishii, P.L. Lions, {\em User's guide to
viscosity solutions of second order partial differential equations}, Bull.
Amer. Math. Soc. (N.S.) 27 (1992), no. 1, 1--67.
\bibitem{LL} J.M. Lasry, P. L. Lions, {\em Nonlinear Elliptic Equations with Singular Boundary Conditions and Stochastic Control with state Constraints}, Math. Ann. 283,  (1989), 583--630. 
 \bibitem{LP}T. Leonori, A. Porretta,  {\em Large solutions and gradient bounds for quasilinear elliptic equations}, Comm. in Partial Differential Equations 41, 6 (2016), 952--998.
\bibitem{LPR} T. Leonori, A. Porretta, G. Riey,  {\em Comparison principles for p-Laplace equations with lower order terms}, Annali di Matematica Pura ed Applicata 196(3), (2017), 877--903.           
\bibitem{P} A. Porretta, {\em The ergodic limit  for a viscous Hamilton--Jacobi equation with Dirichlet conditions},  Rend. Lincei Mat. Appl. 21 (2010), 59--78. 
\end{thebibliography}
          \end{document}